\newtheorem{theorem}{Theorem}
\newtheorem{lemma}{Lemma}
\newtheorem{remark}{Remark}
\begin{document}

\title{Integrable Geodesic Flows on Cones over Riemannian Manifolds}

\author {Andrey E. Mironov and Siyao Yin}

\date{}
\maketitle
\begingroup
\renewcommand\thefootnote{}
\footnote{The work is supported by the Mathematical Center in Akademgorodok under the agreement No.~075-15-2025-348 with the Ministry of Science and Higher Education of the Russian Federation.}%
\addtocounter{footnote}{-1}
\endgroup

\begin{abstract}
  In this paper we study the behavior of geodesics on cones over arbitrary $C^3$-smooth closed Riemannian manifolds. 
  We show that the geodesic flow on such cones admits first integrals whose values uniquely determine almost all geodesics  except for radial geodesics; thus, the geodesic flow is superintegrable. 
  Moreover, we prove that the geodesic flow restricted to the open dense subset of the cotangent bundle corresponding to all non-radial trajectories is Liouville--Arnold integrable.
  This investigation is inspired by our recent results on Birkhoff billiards inside cones over convex manifolds where similar results hold true.
\end{abstract}


\section{Introduction}
Let $\Gamma$ be a closed smooth Riemannian manifold, $\operatorname{dim}\Gamma=n$, with the metric $ds^2=\sum_{i,j=1}^ng_{ij}(u)du^idu^j,$ where $u=(u^1,\dots,u^n)$ are local coordinates on $\Gamma$. The geodesics on $\Gamma$ are defined by the equations
$\ddot{u}^i+\Gamma^i_{jk}\dot{u}^j\dot{u}^k=0$,
where $\Gamma^i_{jk}$ are Christoffel symbols. In some special cases these equations can be integrated. For example, Jacobi solved the equations in the case of ellipsoids. In the general case solving these equations is a hard problem.
Denote by $T^1\Gamma\subset T\Gamma$ the unit tangent bundle.
Let $x_0\in\Gamma, v_0\in T^1_{x_0}\Gamma$ and let $\gamma(s)$ be the unique geodesic on $\Gamma$
such that $\gamma(0)=x_0, \dot{\gamma}(0)=v_0$.
The geodesic flow $\rho^s:T^1\Gamma\rightarrow T^1\Gamma$ is defined by the identity
\[
  \rho^s(x_0,v_0)=(\gamma(s),\dot{\gamma}(s)).
\]
A function $F:T^1\Gamma\rightarrow {\mathbb R}$ is called a {\it first integral}
if it is invariant under the flow.
The geodesic flow can also be equivalently defined by the Hamiltonian system on $T^*\Gamma$
\[
  \dot{u}^i=\frac{\partial H}{\partial p_i},\qquad
  \dot{p}_i=-\frac{\partial H}{\partial u^i},
\]
where $H=\frac{1}{2}\sum_{i,j=1}^n g^{ij}(u)p_ip_j$.
A function $G:T^*\Gamma\rightarrow {\mathbb R}$ is called a {\it first integral} if
\[
 \frac{d G}{ds}
 =\{H,G\}
 =\sum_{i=1}^n\left(
 \frac{\partial G}{\partial u^i}\frac{\partial H}{\partial p_i}
 -
 \frac{\partial H}{\partial u^i}\frac{\partial G}{\partial p_i}
 \right)=0.
\]
The geodesic flow is called {\it Liouville--Arnold integrable}
if there are first integrals $G_1=H,G_2,\dots,G_n$ functionally independent almost everywhere
and $\{G_i,G_j\}=0$, $i,j=1,\dots,n$;
in this case, the geodesic equations can be integrated by quadratures.

There are many remarkable examples of Riemannian manifolds with integrable geodesic flows, see, e.g., \cite{BKF}--\cite{BF} and references therein.
There exist topological obstructions for integrability in the case of real analytic metrics.
Kozlov's theorem establishes that the geodesic flow on an oriented closed surface of genus $g>1$ with any real analytic metric does not admit real analytic first integrals \cite{K}.
Taimanov \cite{T} generalized this result to the  many-dimensional case. 
Butler \cite{But} showed that there exist real-analytic Riemannian metrics on certain compact nilmanifolds for which the geodesic flow does not have real-analytic first integrals, while remaining integrable in the smooth category.
Bolsinov and Taimanov constructed an example of real-analytic metric on a $3$-dimensional manifold whose geodesic flows are smoothly integrable and yet have positive topological entropy \cite{BT}.
In general, the problem of the existence of metrics on an arbitrary manifold with integrable geodesic flows is very interesting and difficult. 
For example, it is unclear whether there exist metrics on closed surfaces of genus $g>1$ with Liouville--Arnold integrable geodesic flow in the smooth category.
It is a very interesting question whether there exist metrics on the two-dimensional torus whose geodesic flows admit non-reducible polynomial first integrals in 
$p_1, p_2$
of degree greater than two (see, e.g., \cite{BM}).

In this paper we study the geodesic flow on the cone over an arbitrary closed $C^3$ Riemannian manifold $\Gamma$, 
$\dim \Gamma =n$.  
By Nash embedding theorem, $\Gamma$ can be isometrically embedded in 
$\mathcal{P} = \{ x^{N+1} = 1 \} \subset \mathbb{R}^{N+1}$ for some $N>n$.  
Let us consider the cone over $\Gamma$ 
$$
K = \{\, t p \mid p \in \Gamma, \; t \in \mathbb{R} \,\}
  \subset \mathbb{R}^{N+1},
$$
equipped with the Riemannian metric induced by the ambient Euclidean metric of $\mathbb{R}^{N+1}$.
We denote its upper and lower parts by
$$
K^+ = \{\, t p \mid p \in \Gamma, \; t>0 \,\}, 
\qquad
K^- = \{\, t p \mid p \in \Gamma, \; t<0 \,\}.
$$
Let $O$ be the singular point of $K$ ($t=0$). 
To define $TK$ it is natural to set  
$T_O K := K$.
We show below that if a geodesic on $K$ contains $O$ or has $O$ as a limit point, then the geodesic is a cone generatrix,
i.e. a line passing through $O$ (see Lemma~\ref{lem:two-types}).
Hence the geodesic flow
\[
\rho^s : T^1K \to T^1K, \qquad s \in \mathbb{R},
\]
is defined for all $s$.

\begin{figure}[h]
  \begin{center}
  \includegraphics[scale=0.245]{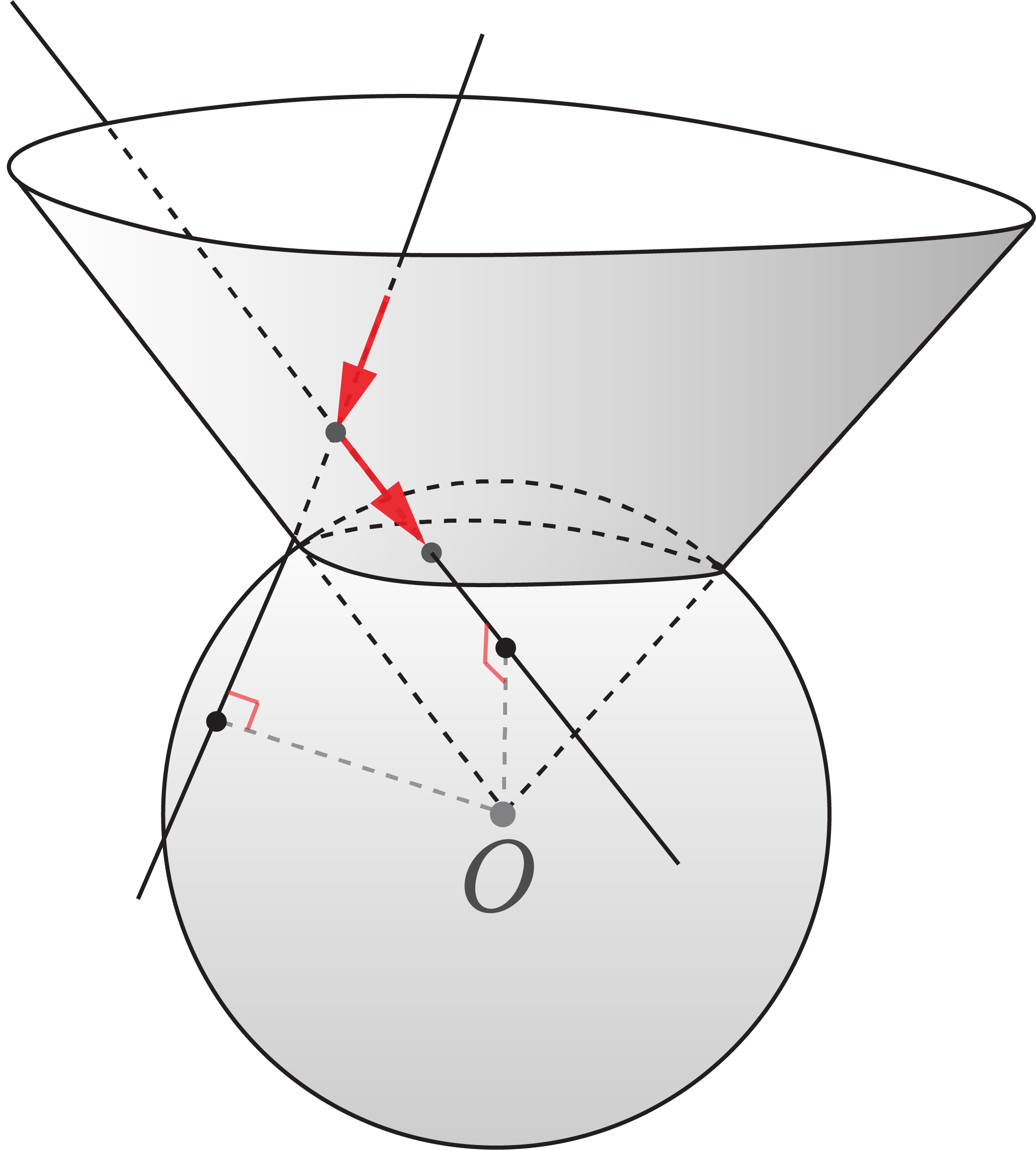}
  \end{center}
  \caption{The sphere as a caustic of the billiard inside a cone.}
  \label{fig:caustic}
\end{figure} 

In \cite{MY1,MY2} the billiard dynamics inside $K^+$ was studied in the case where 
$\dim \Gamma= N-1$.  
It was shown that the line containing any segment of a billiard trajectory in $K^+$  
is tangent to a fixed sphere centered at $O$ (see Fig.~1).  
Hence the radius $r$ of this sphere serves as a first integral of the billiard system.
It turns out that an analogous property holds for geodesics on a cone.  
For any non-generatrix geodesic on $K$, all its tangent lines are tangent to a common sphere (see Fig.~2).

\begin{figure}[H]
  \begin{center}
  \includegraphics[scale=0.245]{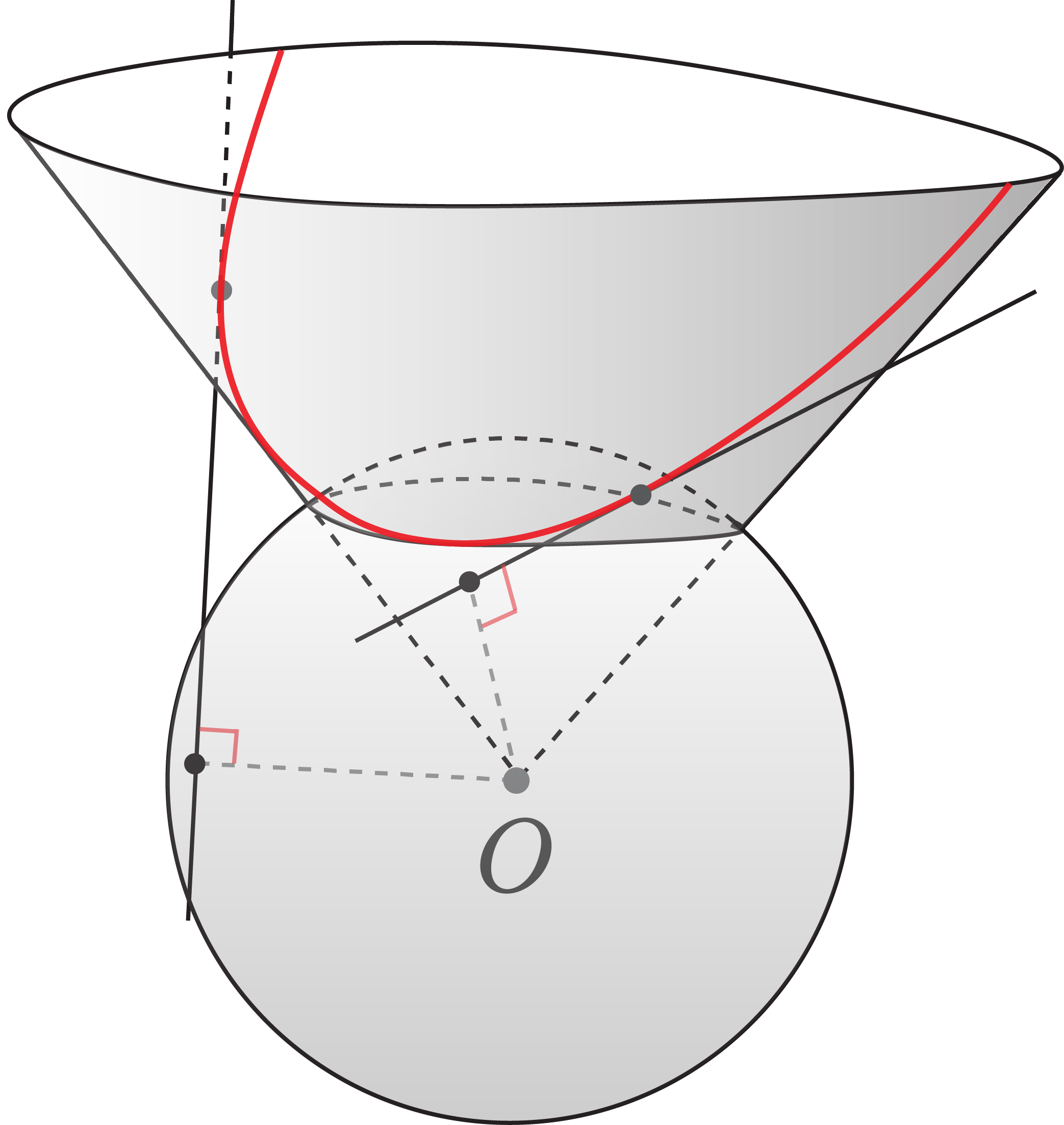}
  \end{center}
  \caption{All tangent lines of a given non-generatrix geodesic are tangent to a common sphere.}
  \label{fig:caustic-geo}
\end{figure} 

We have the following.

\begin{lemma}\label{lem:distance}
  Let $\gamma(s)$ be a geodesic on a cone $K \subset \mathbb{R}^{N+1}$. 
  Then 
  \begin{equation}\label{eq:distance-integral}
  I = \|\gamma(s)\|^2 - \frac{\langle \gamma(s), \gamma'(s) \rangle^2}{\|\gamma'(s)\|^2}
  \end{equation}
  remains constant along $\gamma$.  
  Geometrically, $I$ represents the squared distance from the vertex $O$ to the tangent line of~$\gamma$.
  \end{lemma}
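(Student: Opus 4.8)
The plan is to exploit the two defining features of the situation: that $\gamma$ is a geodesic of the induced metric, and that $K$ is a cone. For the first I will use the standard extrinsic characterization, namely that a curve on a submanifold of Euclidean space is a geodesic if and only if its ambient acceleration $\gamma''(s)$ is orthogonal to the tangent space $T_{\gamma(s)}K$. For the second I will use the key structural property of a cone: since $K$ is ruled by the lines $\{\,t\,\gamma(s)\mid t\in\mathbb{R}\,\}$ through the vertex $O$, the radial direction -- that is, the position vector $\gamma(s)$ itself -- always lies in $T_{\gamma(s)}K$ (away from $O$, which non-generatrix geodesics avoid by Lemma~\ref{lem:two-types}). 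Thus both $\gamma'(s)$ and $\gamma(s)$ are tangent to $K$, and the normality of $\gamma''$ yields the two identities $\langle\gamma'',\gamma'\rangle=0$ and $\langle\gamma'',\gamma\rangle=0$.

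From these two relations the conservation of $I$ follows by a short computation. Writing $a(s)=\langle\gamma,\gamma'\rangle$ and $b(s)=\|\gamma'\|^2$, the first identity gives $b'=2\langle\gamma'',\gamma'\rangle=0$, so the speed is constant, while the second gives $a'=\|\gamma'\|^2+\langle\gamma,\gamma''\rangle=b$. Since $\tfrac{d}{ds}\|\gamma\|^2=2a$, I obtain
\[
I'=\frac{d}{ds}\Big(\|\gamma\|^2-\frac{a^2}{b}\Big)=2a-\frac{2a\,a'}{b}=2a-\frac{2ab}{b}=0,
\]
using $b'=0$. This establishes that $I$ is constant along $\gamma$.

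For the geometric interpretation I will decompose the position vector orthogonally with respect to the tangent direction, setting $\gamma_\perp=\gamma-\frac{\langle\gamma,\gamma'\rangle}{\|\gamma'\|^2}\gamma'$. Minimizing $\|\gamma+t\gamma'\|^2$ in $t$ shows that $\gamma_\perp$ is precisely the foot of the perpendicular from $O$ to the tangent line $\{\gamma(s)+t\gamma'(s)\mid t\in\mathbb{R}\}$, so the squared distance from $O$ to that line equals $\|\gamma_\perp\|^2=\|\gamma\|^2-\frac{\langle\gamma,\gamma'\rangle^2}{\|\gamma'\|^2}=I$. As a consistency check, for a generatrix geodesic $\gamma$ and $\gamma'$ are parallel, so $I=0$ and the tangent line passes through $O$, matching a vanishing distance.

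I expect the only genuine subtlety to be the justification of the radial tangency $\gamma(s)\in T_{\gamma(s)}K$, together with the attendant need to stay away from the singular vertex; once this is secured the rest is the routine two-line differentiation above. The smoothness hypothesis and Lemma~\ref{lem:two-types}, which guarantees that a non-generatrix geodesic neither passes through $O$ nor accumulates at it, are exactly what make the extrinsic geodesic equation -- and hence the two orthogonality relations -- available along the whole of $\gamma$.
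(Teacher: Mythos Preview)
Your argument is correct and follows essentially the same route as the paper: both use that $\gamma''$ is normal to $K$ while $\gamma$ and $\gamma'$ are tangent, giving $\langle\gamma'',\gamma'\rangle=0$ and $\langle\gamma'',\gamma\rangle=0$, from which $I'=0$ follows by direct differentiation. One small point: your appeal to Lemma~\ref{lem:two-types} to keep $\gamma$ away from $O$ is unnecessary (the computation of $I'$ is local at smooth points) and would in fact be circular, since that lemma is proved using Lemma~\ref{lem:distance}.
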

  \begin{remark}
    The first integrals  $I$ in~\eqref{eq:distance-integral} coincides with the restriction to $TK$ of billiard first integral $I_B$ obtained in \cite{MY1} for the billiard inside $K^+$. 
    In that case, the corresponding first integral has the form 
    \[
    I_B = \| x\|^2- \frac{\langle x, v \rangle^2}{\| v\|^2},
    \]
    where $x=(x^1,\dots,x^{N+1})$ denotes the position of the particle and 
    $v=(v^1,\dots,v^{N+1})$ its velocity.
    \end{remark}

Moreover, for billiards, if $\Gamma$ is $C^3$-smooth and strictly convex with an everywhere nondegenerate second fundamental form,  
the system admits a set of first integrals whose values uniquely determine every trajectory.  
We extend this idea to the geodesic flow on $K$, constructing a set of first integrals that uniquely determine all non-radial geodesics.  
The first main result of the paper is the following.
\begin{theorem}\label{lem:integral}
    There exist continuous first integrals $I^1,\ldots,I^{2N+2}:T^1K\to \mathbb{R}$ of the geodesic flow on $T^1K$, which are $C^1$-smooth on $T^1K^{+}\cup T^1K^{-}$, such that the image of the map
    $$
    \mathcal{I}=(I^1,\ldots,I^{2N+2}):T^1K\to\mathbb{R}^{2N+2}
    $$
    has the property that each point in the image, except the origin in $\mathbb{R}^{2N+2}$, determines a unique geodesic on $K$, while the origin corresponds to all radial geodesics on $K$.
\end{theorem}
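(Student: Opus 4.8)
My plan is to build every integral out of one distinguished point of each non-generatrix geodesic: its point of closest approach to the vertex $O$. First I would record the two kinematic identities that drive everything. Since the generatrix direction $\gamma(s)$ is itself tangent to $K$ at $\gamma(s)$, the geodesic condition $\gamma''(s)\perp T_{\gamma(s)}K$ gives $\langle\gamma,\gamma''\rangle=0$; differentiating $\langle\gamma,\gamma'\rangle$ and using $\|\gamma'\|\equiv 1$ then yields $\tfrac{d}{ds}\langle\gamma,\gamma'\rangle=1$. Hence along the geodesic through $(x,v)\in T^1K$ one has $\langle\gamma(s),\gamma'(s)\rangle=\langle x,v\rangle+s$ and, integrating once more, $\|\gamma(s)\|^2=I+(\langle x,v\rangle+s)^2$, where $I$ is the invariant of Lemma~\ref{lem:distance} (here $I=\|x\|^2-\langle x,v\rangle^2$ because $\|v\|=1$). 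For $(x,v)\in\mathcal T$ we have $I>0$, so by Lemma~\ref{lem:two-types} the geodesic never meets $O$ and attains a unique closest approach to $O$, at distance $\sqrt I$, exactly at flow time $s^{\ast}=-\langle x,v\rangle$.

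Next I would define the closest-approach map $\Phi(x,v):=\rho^{-\langle x,v\rangle}(x,v)=:(X_{\ast}(x,v),V_{\ast}(x,v))$, extended to the generatrices by $\Phi:=0$. Because the closest-approach state is an intrinsic feature of the whole orbit, $X_{\ast}$ and $V_{\ast}$ are constant along the flow, i.e. first integrals. I would then take as the $2N+2$ integrals the components of $X_{\ast}$ together with those of $W_{\ast}:=\sqrt I\,V_{\ast}=\|X_{\ast}\|\,V_{\ast}$. On $\mathcal T$ the geodesic stays in the region $\{\|x\|\ge\sqrt I\}$, bounded away from the singular point, where the induced metric on $K$ is $C^3$; smooth dependence of $\rho^s$ on $(s,x,v)$ together with smoothness of $s^{\ast}=-\langle x,v\rangle$ makes $\Phi$, hence $X_{\ast}$ and $W_{\ast}$, of class $C^1$ there.

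To recover a geodesic from a nonzero image point $(\xi,\omega)$, note $\|X_{\ast}\|=\|W_{\ast}\|=\sqrt I$, so $\xi\ne 0$ forces $I>0$ and one recovers $V_{\ast}=\omega/\|\xi\|$; the geodesic is then the unique orbit of $\rho^s$ through the state $(\xi,V_{\ast})$, which gives injectivity on non-generatrix geodesics. Conversely every generatrix has $I=0$, hence $X_{\ast}=W_{\ast}=0$, so the generatrices form precisely the fibre over the origin. Global continuity is where the scaling by $\sqrt I$ pays off: since $\|X_{\ast}(x,v)\|=\|W_{\ast}(x,v)\|=\sqrt{I(x,v)}$ and $I$ is continuous on all of $T^1K$ with $I\to 0$ at the generatrices, both $X_{\ast}$ and $W_{\ast}$ tend to $0$ there regardless of how $V_{\ast}$ behaves, which extends them continuously across the generatrix stratum and matches the assigned value $\Phi=0$.

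I expect the main obstacle to be exactly this junction between $\mathcal T$ and the generatrices. The flow is genuinely singular at $O$, and geodesics near a generatrix pass arbitrarily close to the vertex, so one cannot control $\Phi$ there by flow continuity. The point to exploit is that continuity of the vector-valued integrals needs no control of the directions of $X_{\ast}$ or $V_{\ast}$ at all: the a priori norm identities $\|X_{\ast}\|=\|W_{\ast}\|=\sqrt I$ force convergence to $0$ by magnitude, bypassing the singular flow. The remaining regularity, namely $C^1$-smoothness, is required only on $\mathcal T$, where everything happens in the smooth locus of $K$ and reduces to the standard $C^1$-dependence of solutions of the geodesic equation (with $C^2$ Christoffel symbols, since the metric is $C^3$) on initial data. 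Establishing these dependence statements uniformly on neighbourhoods in $\mathcal T$, and checking that $s^{\ast}=-\langle x,v\rangle$ keeps the flow within a fixed distance of $O$, are the technical steps I would carry out in detail.
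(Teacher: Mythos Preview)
Your approach is essentially the paper's: both record the position and (suitably rescaled) velocity at the unique closest-approach point to $O$, obtain $C^1$ smoothness on $\mathcal T$ from $C^1$ dependence of the flow on initial data, and obtain continuity across the generatrix stratum from the vanishing of $I$ there. Your write-up is marginally cleaner in two places---you compute $s^{\ast}=-\langle x,v\rangle$ explicitly where the paper invokes the implicit function theorem, and you scale only $V_{\ast}$ by $\sqrt{I}$ (using the sharp identity $\|X_{\ast}\|=\sqrt{I}$) where the paper multiplies both position and velocity by $I$ and uses only the bound $\|X_{\ast}\|\le\|x\|$---but the strategy is identical.
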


Although it is unclear when the geodesic flow on an arbitrary Riemannian manifold $\Gamma$ is integrable, on the cone 
over $\Gamma$ the geodesic flow is superintegrable in the sense that it admits enough first integrals to uniquely define almost all geodesics except for cone generatrices.

In the next lemma, we state several properties of geodesics on $K$. Let 
\[
  \Sigma = K \cap \mathbb{S}^N,
  \qquad 
  \dim \Sigma = n,
\] 
where $\mathbb{S}^N\subset \mathbb{R}^{N+1}$ is the unit sphere centered at $O$ and let $g_{\scriptscriptstyle \Sigma}$ denote the induced metric on $\Sigma$.
The metric on the cone $K$ has the form 
$$  
g = dt^2 + t^2 g_{\scriptscriptstyle \Sigma},
$$
which is an example of a warped product metric, studied for example in \cite{ON}.
In particular, Proposition 7.38 together with Remark 7.39 in \cite{ON} imply that the projection of any geodesic in 
$K$ onto $\Sigma$  is a pregeodesic on $\Sigma$ 
(that is, a geodesic up to reparametrization).
In the following lemma we give an explicit formulation of this correspondence in the case of the cone and analyze the asymptotic behavior of the geodesic flow as $s\to \pm\infty$.

Suppose $\gamma(s)\subset K$ is a non-radial geodesic. Then $I>0$ and $s\in(-\infty,+\infty)$ 
(see Lemma~\ref{lem:two-types} below). One observes that $a_1 \gamma(a_2 s)$, 
$a_1, a_2\in \mathbb{R}\setminus\{0\}$, is also a geodesic. Thus, without loss of generality, in the next lemma 
we assume that $I=1$ and $\|\gamma'(s)\|=1$. We have:

\begin{lemma}\label{lem:cone-sphere-correspondence}
  \begin{itemize}
    \item[1)] The geodesic $\gamma(s)$ is tangent to $\mathbb{S}^N$ at a unique point $\gamma(s_0)$.  
    Its radial projection
    \begin{equation}\label{eq:radialprojection}
      \tilde{\gamma}(\tilde{s}) = \frac{\gamma(s)}{\|\gamma(s)\|} \subset \Sigma,
    \end{equation}
    where
    \begin{equation}\label{eq:parameter}
      s = \tan\tilde{s},
      \qquad 
      \tilde{s}\in\left(-\frac{\pi}{2},\frac{\pi}{2}\right),
    \end{equation}
    is a geodesic in $\Sigma$, and $\tilde{s}$ is the arc-length parameter of $\tilde{\gamma}$, $\|\tilde{\gamma}'(\tilde{s})\|=1$.

    \item[2)] Conversely, let $\tilde{\gamma}(\tilde{s})$,  
    $\tilde{s}\in\left(-\frac{\pi}{2},\frac{\pi}{2}\right)$, be an interval of a geodesic on $\Sigma$ of length $\pi$, with $\tilde{s}$ as its arc-length parameter. Then
    \begin{equation}\label{eq:lift}
      \gamma(s) = \tilde{\gamma}(\tilde{s})\sqrt{s^{2}+1}, 
      \qquad
      \tilde{s} = \arctan s,
      \qquad
      s\in(-\infty,+\infty),
    \end{equation}
    is a geodesic on $K$, with $s$ as its arc-length parameter, $\|\gamma'(s)\|=1$, touching  
    $\mathbb{S}^N$ at $\tilde{\gamma}(0)=\gamma(0)$.

    \item[3)] The following limits exist:
    \[
    \lim_{s\to+\infty}{\gamma'(s)}
    =
    \lim_{s\to+\infty}\frac{\gamma(s)}{\|\gamma(s)\|}
    =
    \tilde{\gamma}\left(\frac{\pi}{2}\right)\in K,
    \]
    \[
    \lim_{s\to-\infty}{\gamma'(s)}
    =
    -\lim_{s\to-\infty}\frac{\gamma(s)}{\|\gamma(s)\|}
    =
    -\tilde{\gamma}\left(-\frac{\pi}{2}\right)\in K.
    \]
  \end{itemize}
\end{lemma}
\begin{remark}\label{rmk:covering}
  If $\tilde{\gamma}$ lies on a closed geodesic in $\Sigma$, the interval
  $\tilde{s}\in(-\frac{\pi}{2},\frac{\pi}{2})$ may cover the same closed geodesic more than once.
  Thus, the projection $\gamma(\tan \tilde s) \mapsto \tilde \gamma(\tilde s)$ should not be understood as an one-to-one map in general.
\end{remark}

 The correspondence in Lemma~\ref{lem:cone-sphere-correspondence} shows that non-radial geodesics on $K$
are completely determined, up to a radial scaling, by geodesic segments of fixed length on $\Sigma$,
which is the underlying reason why the geodesic flow on $K$ admits the set of first integrals
stated in Theorem~\ref{lem:integral}.

\medskip

In the next theorem, we will prove that the geodesic flow is Liouville--Arnold integrable when restricted to the open dense subset $\mathcal{M} \subset T^*K^+$ consisting of all cotangent vectors associated with non-radial geodesics.
To describe $\mathcal{M}$,
 let us introduce local coordinates $(t, z)$ on $K^{+}$, where $t$ is the radial coordinate and $z$ represents coordinates on $\Sigma$. 
 Let $(p_0, p)$ be the coordinates on the cotangent fibers induced by the local coordinates $(t, z)$.
 Note that radial geodesics are characterized by $z$ being constant (i.e., the tangent vector is purely radial), which implies that the dual coordinates $p$ vanish. Therefore, the condition for a cotangent vector to correspond to a non-radial geodesic is the non-vanishing of  $p$; 
 hence the set $\mathcal{M}$ is characterized by
$$
\mathcal{M} = \{(t, z; p_0, p) \in T^*K^+ \mid p \neq 0\}.
$$
The restriction of the geodesic flow to $\mathcal{M}$ is smooth and complete (see Lemma~\ref{lem:two-types}). 
Let $H$ be the Hamiltonian generating the geodesic flow. 
The second main result of the paper is the following.
\begin{theorem}\label{thm:liouville-integrability}
  The geodesic flow restricted to the open dense subset $\mathcal{M} \subset T^*K^+$ is Liouville--Arnold integrable; 
  that is, there exist $n+1$ $C^{1}$-smooth first integrals $\mathcal{F}_0=H, \mathcal{F}_1, \dots, \mathcal{F}_{n}$ on $\mathcal{M}$ that are pairwise in involution, i.e., $\{\mathcal{F}_i, \mathcal{F}_j\}=0$, and functionally independent almost everywhere.
\end{theorem}

\begin{remark}\label{eq:cr-smooth}
  If $\Gamma$ is a $C^r$ Riemannian manifold, 
then the first integrals $\mathcal{F}_1, \dots, \mathcal{F}_{n}$ are of class $C^{r-2}$.
\end{remark}

By our construction, the first integrals $\mathcal{F}_1, \dots, \mathcal{F}_{n}$ can not be defined on the subset of $T^*K^+$ that corresponds to the radial geodesics.

In section~2 we will prove Lemma~\ref{lem:distance} and Lemma~\ref{lem:cone-sphere-correspondence}. 
In section~3 we will prove Theorem~\ref{lem:integral}.
In section~4 we will prove Theorem~\ref{thm:liouville-integrability}.

\section{Geodesics on Cones}
We start by proving 
Lemma~\ref{lem:distance} (see Introduction). 
Lemma~\ref{lem:distance} provides an essential observation of the geodesic flow on a cone; its proof is straightforward.
\begin{proof}[Proof of Lemma \ref{lem:distance}]
  Since $\gamma(s)$ is a geodesic on $K$, we have $ \frac{d}{ds}\left(\|\gamma'(s)\|^2\right) =0$. 
  Moreover, since $\gamma''(s)$ is orthogonal to the cone, we have $\langle \gamma''(s), \gamma(s) \rangle = 0$. 
  Therefore,
  $$
      \frac{d}{ds}  \left(
        \|\gamma(s)\|^2- \frac{\langle \gamma(s), \gamma'(s) \rangle^2}{\|\gamma'(s)\|^2}
        \right) 
  = \frac{d \|\gamma(s)\|^2 }{ds}  - 
  \frac{d  \langle \gamma(s), \gamma'(s) \rangle^2}{ds}
  \frac{1}{\|\gamma'(s)\|^2}
  $$
  $$        
        = 2 \langle \gamma(s), \gamma'(s) \rangle
        - 
        2\langle \gamma(s), \gamma'(s) \rangle
         \Big(\langle \gamma'(s), \gamma'(s) \rangle + \langle \gamma(s), \gamma''(s) \rangle   \Big)
         \frac{1}{\|\gamma'(s)\|^2}
         =0.
  $$
  Hence $I$ is a first integral.
  
  Geometrically, $\|\gamma(s)\|^2$ is the squared length of $\gamma(s)$,
  and $\langle \gamma(s), \gamma'(s)\rangle^2 / \|\gamma'(s)\|^2$ is  the squared length of its projection onto the tangent direction.
  Their difference therefore equals the squared distance from $O$ to the tangent line.

  Lemma \ref{lem:distance} is proved.
  \end{proof}

By  Lemma \ref{lem:distance}, \( I \ge 0 \). 
For our convenience before proving Lemma~\ref{lem:cone-sphere-correspondence} we first prove the following lemma, 
which classifies geodesics on the cone according to whether \( I = 0 \) or \( I > 0 \).

\begin{lemma}\label{lem:two-types}
  Every geodesic $\gamma(s)$ on $K$ with $\|\gamma'(s)\|=1$, after an appropriate shift of parameter, belongs to one of the following two classes:
  \begin{enumerate}
    \item[(i)] \textbf{Radial geodesics (generatrices):}  
    $$
      \gamma(s) = s p, 
      \quad p \in \Sigma, 
      \quad  s\in (-\infty,+\infty),
    $$
    which are straight lines passing through the vertex $O$. They satisfy $I = 0$.
    
    \item[(ii)] \textbf{Non-radial geodesics:}  
    \begin{equation}\label{eq:nonradial-geodesic}
      \gamma(s) = q(s)\sqrt{s^2 + I}, \quad s\in (-\infty,+\infty),
    \end{equation}
    where $q(s)$ is some curve on $\Sigma$. They satisfy $I > 0$.
  \end{enumerate}
  \end{lemma}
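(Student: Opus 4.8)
The plan is to reduce the whole classification to a single scalar identity for the squared norm $f(s):=\|\gamma(s)\|^2$. Since $\gamma$ has unit speed we have $\langle\gamma'(s),\gamma'(s)\rangle=1$, and since $\gamma''(s)$ is normal to the cone while the position vector $\gamma(s)$ is tangent to it along the generatrix, we have $\langle\gamma''(s),\gamma(s)\rangle=0$ — exactly the two facts already used in the proof of Lemma~\ref{lem:distance}. From these I would compute
$$
f'(s)=2\langle\gamma(s),\gamma'(s)\rangle,\qquad
f''(s)=2\langle\gamma'(s),\gamma'(s)\rangle+2\langle\gamma(s),\gamma''(s)\rangle=2.
$$
Hence $f(s)=s^2+bs+c$ for constants $b,c$, and after the shift $s\mapsto s-b/2$ the linear term vanishes. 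Plugging $\langle\gamma,\gamma'\rangle=\tfrac12 f'=s$ and $\|\gamma'\|=1$ into $I=\|\gamma\|^2-\langle\gamma,\gamma'\rangle^2$ identifies the surviving constant with the first integral, yielding the normal form
$$
\|\gamma(s)\|^2=s^2+I,\qquad \langle\gamma(s),\gamma'(s)\rangle=s .
$$

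The classification is then immediate from the sign of $I$ (recall $I\ge0$ by Lemma~\ref{lem:distance}). If $I>0$, then $\|\gamma(s)\|^2=s^2+I\ge I>0$, so $\gamma$ never meets the vertex; setting $q(s):=\gamma(s)/\sqrt{s^2+I}$ gives a curve on $\Sigma=K\cap\mathbb{S}^N$ and the representation~\eqref{eq:nonradial-geodesic}. If $I=0$, the squared distance from $O$ to the tangent line vanishes identically, i.e. $O$ lies on every tangent line; equivalently, by the equality case of Cauchy–Schwarz, $\gamma(s)$ and $\gamma'(s)$ are parallel for all $s$. Writing $\gamma=\mu u$ with $u$ a unit vector and using $\langle u,u'\rangle=0$, the parallelism forces $\mu u'=0$, hence $u'\equiv 0$ where $\mu\ne0$; thus $u\equiv p$ is constant and $\gamma(s)=sp$ is the generatrix through $O$, which is case~(i).

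The step I expect to be the main obstacle is not this algebra but the assertion that each non-radial geodesic is defined for all $s\in(-\infty,+\infty)$. For this I would work on the smooth manifold $K\setminus\{O\}$: the identity $\|\gamma(s)\|^2=s^2+I$ holds on the maximal interval of existence, and for $I>0$ it confines $\gamma$ to a fixed annular region bounded away from both the vertex and infinity over any finite parameter range. Since $\gamma$ has unit speed it cannot escape to infinity in finite parameter and cannot reach the singular point, so standard ODE continuation extends it to all of $\mathbb{R}$. The case $I=0$ needs separate care precisely because the geodesic does reach $O$ (at the shifted value $s=0$); here I would invoke the fact that a straight line lying in $K$ is itself a geodesic, so that the two rays produced on $s<0$ and $s>0$ assemble into the single generatrix $\gamma(s)=sp$, consistent with the convention $T_OK:=K$ in~\eqref{eq:tangent-cone}. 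This is also the point at which one must argue that a geodesic hitting the vertex cannot turn, which is why the conclusion is the full line rather than a broken one.
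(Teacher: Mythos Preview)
Your argument is correct and follows essentially the same route as the paper: both proofs compute $f''=2$ for $f(s)=\|\gamma(s)\|^2$, integrate to $f(s)=s^2+c_1s+c_2$, shift away the linear term, identify the remaining constant with $I$, and then split on $I=0$ versus $I>0$. The only cosmetic difference is that the paper secures global existence in the $I>0$ case by smoothing the cone near the vertex and invoking Hopf--Rinow, whereas you phrase the same confinement-plus-continuation argument directly in ODE terms.
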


\begin{proof}

  If $I=0$, then by \eqref{eq:distance-integral} we have 
  $\|\gamma(s)\|^2 = \langle \gamma(s), \gamma'(s)\rangle^2$, 
  which means that $\gamma(s)$ is everywhere parallel to its tangent direction $\gamma'(s)$. 
  Hence $\gamma$ is a generatrix of the cone.
  
  If $I>0$, then by Lemma~\ref{lem:distance} we have
  $\|\gamma(s)\|^2 \geq I > 0$. 
  Therefore the origin $O$ is not a limit point of $\gamma(s)$. 
  In particular, the geodesic lies entirely in one of the two connected components $K^+$ or $K^-$ 
  and stays at a distance not smaller than $\sqrt{I}$ from the vertex $O$.
  
  Assume that $\gamma$ lies in $K^+$. 
  We can modify the cone near the vertex to obtain a complete smooth Riemannian manifold 
  that coincides with $K^+$ outside the sphere of radius $\sqrt{I}/2$ centered at $O$.
  This modified manifold is complete as a metric space. 
  By the Hopf--Rinow theorem, it is also geodesically complete; 
  hence the maximal interval of existence of $\gamma$ is $(-\infty, \infty)$.
  
  To derive the representation \eqref{eq:nonradial-geodesic}, 
  we write $\gamma(s)$ in the form
  \[
  \gamma(s) = q(s)t(s),
  \]
  where $q \in \mathbb{S}^N$ and $t(s) = \|\gamma(s)\| > 0$.
  Then
  \[
  t^2(s) = \langle \gamma(s), \gamma(s)\rangle.
  \]
  Differentiating twice with respect to $s$, we obtain
  \[
  \frac{d^2}{ds^2} t^2(s)
    = 2\langle \gamma'(s), \gamma'(s)\rangle 
      + 2\langle \gamma''(s), \gamma(s)\rangle.
  \]
  Since $\gamma(s)$ is a geodesic on $K$, it satisfies 
  $\langle \gamma''(s), \gamma(s)\rangle = 0$,
  and by our assumption $\|\gamma'(s)\| = 1$.
  Therefore,
  \[
  \frac{d^2}{ds^2} t^2(s) = 2.
  \]
  Integrating twice yields
  \[
  t^2(s) = s^2 + c_1 s + c_2,
  \]
  where $c_1, c_2$ are constants. 
  By shifting the parameter $s$, we may assume $c_1 = 0$, 
  so that
  \[
  t^2(s) = s^2 + c_2,
  \]
  where $c_2$ is a positive constant. 

  The function $t^2(s)$ attains its minimum at $s = 0$, 
where $\gamma(0) = \sqrt{c_2} q(0)$ and $\gamma'(0) = \sqrt{c_2} q'(0)$.
Since $\|q(s)\| = 1$, it follows that $\langle q(0), q'(0)\rangle = 0$. 
Substituting these relations into the definition of $I$ in \eqref{eq:distance-integral}, 
we find that $c_2=I$. 
Hence we obtain the representation \eqref{eq:nonradial-geodesic}.

Lemma \ref{lem:two-types} is proved.
  \end{proof}

\begin{proof}[Proof of Lemma \ref{lem:cone-sphere-correspondence}]
    1)
    Let $\gamma(s)$ be a non-radial geodesic $\gamma(s)$ on $K$ with $I=1$ and $\|\gamma'(s)\|=1$. By Lemma \ref{lem:two-types}, it can be represented 
    after an appropriate translation of the parameter as 
    \begin{equation}
      \label{eq:nonradial-normalized}
      \gamma(s)
      = q(s) \sqrt{s^2 + 1},
      \qquad s \in (-\infty, +\infty).
    \end{equation}
    where $\|q(s)\| =1$.  
    Differentiating (\ref{eq:nonradial-normalized}) gives
    \begin{equation}\label{eq:nonradial-tangent}
      \gamma'(s)
      = q'(s) \sqrt{s^2 + 1}
      + q(s) \frac{ s}{\sqrt{ s^2 + 1}},
      \quad s \in (-\infty, +\infty).
    \end{equation}
    Since $\langle q(s), q(s) \rangle = 1$ and $\langle q(s), q'(s) \rangle = 0$,  using (\ref{eq:nonradial-normalized}), (\ref{eq:nonradial-tangent}) we obtain
    \begin{equation}\label{eq:gamma-gammaprime}
      \| \gamma(s)\|^2 = s^2+1 ,
      \qquad
    \langle \gamma(s), \gamma'(s) \rangle =  s,
  \end{equation}
    and thus $\langle \gamma(s), \gamma'(s) \rangle = 0$ if and only if $s = 0$.
    At this unique point, $\gamma(s)$ is tangent to the unit sphere $\mathbb{S}^N$.
   
    Next we prove that the curve $\tilde{\gamma}(\tilde{s}), \tilde{s}\in \left(-\frac{\pi}{2}, \frac{\pi}{2}\right)$ defined in (\ref{eq:radialprojection}) is an interval of a geodesic on $\Sigma$ with arc-length parameter.

    Using (\ref{eq:radialprojection}), \eqref{eq:parameter}, (\ref{eq:gamma-gammaprime}), one checks
    \[
    \|\gamma(s)\|=\sqrt{s^2 + 1}=\frac{1}{\cos\tilde s},\qquad
    \frac{ds}{d\tilde s}=\frac{1}{\cos^2\tilde s},
    \]
    and
    \begin{equation}\label{eq:tildeqs}
    \tilde\gamma(\tilde s)=\cos\tilde s\; 
    \gamma \left(\tan\tilde s\right).
    \end{equation}
    Differentiating \eqref{eq:tildeqs} with respect to $\tilde s$ twice gives
    \begin{equation}\label{eq:qprim}
    \tilde\gamma'(\tilde s)
    = -\sin\tilde s\;
    \gamma\left(\tan\tilde s\right)
    + \frac{1}{\cos\tilde s}\;
    \gamma'\left(\tan\tilde s\right),
    \end{equation}
    $$
    \tilde{\gamma}''(\tilde s)
    =
    -\cos \tilde s
    \;
    \gamma\left(\tan\tilde s\right)
    -
    \frac{\sin \tilde{s}}{ \cos^2\tilde s}\,
    \gamma'\left(\tan\tilde s\right)
    +\frac{\sin \tilde{s}}{ \cos^2\tilde s}\,
    \gamma'\left(\tan\tilde s\right)
    + \frac{1}{\cos^3\tilde s}\,
    \gamma''\left(\tan\tilde s\right)
    $$
    \begin{equation}\label{eq:qprimeprime}
    = -\cos\tilde s\;\gamma\left(\tan\tilde s\right)
    + \frac{1}{\cos^3\tilde s}\;
    \gamma''\left(\tan\tilde s\right).
    \end{equation}
    The first term in \eqref{eq:qprimeprime} is proportional to $\tilde\gamma(\tilde s)$ and therefore orthogonal to the tangent space $T_{\tilde\gamma(\tilde s)}\Sigma$. 
    For the second term, since $\gamma$ is a geodesic on $K$, $ \gamma''\left(\tan\tilde s\right)$ is orthogonal to $T_{\gamma(\tan \tilde s)}K =T_{\tilde \gamma(\tilde s)}K $, and hence is orthogonal to $T_{\tilde \gamma(\tilde s)}\Sigma \subset T_{\tilde \gamma(\tilde s)}K$.
    Thus  $\tilde\gamma''(\tilde s)$ is orthogonal to  
    $T_{\tilde \gamma(\tilde s)}\Sigma $, and $\tilde\gamma$ is a geodesic on $\Sigma $.
    
    It follows that $\|\tilde\gamma'(\tilde s)\|$ is constant. Evaluating \eqref{eq:qprim} at $\tilde s=0$ (where $s=0$ by \eqref{eq:parameter}) yields
    \[
    \|\tilde\gamma'(\tilde s)\|=\|\tilde\gamma'(0)\|
    =\|\gamma'(0)\|=1.
    \]
    Therefore $\tilde{s}$ is the arc-length parameter of $\tilde\gamma$ and 
    $$
    \int_{-\pi/2}^{\pi/2}\|\tilde\gamma'(\tilde s)\|\,d\tilde s
    =\pi.
    $$
    
    2) To prove that the curve $\gamma(s)$ defined by \eqref{eq:lift} is a geodesic on $K$, 
it suffices to show that $\gamma''(s)$ is orthogonal to $T_{\gamma(s)}K$.

By \eqref{eq:lift} we have
\begin{equation}\label{eq:lift1}
  \gamma(s) = \tilde{\gamma}(\arctan s) \sqrt{s^2+1}.
\end{equation}
Let us compute $\gamma''(s)$:
\begin{equation}\label{eq:gpsigma}
\gamma'(s) = \tilde{\gamma}'(\arctan s )\frac{1}{\sqrt{s ^2 +1}}
+\tilde{\gamma}(\arctan s)\frac{s}{\sqrt{s ^2 +1}},
\end{equation}
$$
\gamma''(s) =  \frac{\tilde{\gamma}''(\arctan s )}{(s ^2 +1)^{3/2}}
+\frac{(-s) \tilde{\gamma}'(\arctan s )}{(s ^2 +1)^{3/2}}
+
\frac{s \tilde{\gamma}'(\arctan s)}{(s ^2 +1)^{3/2}} 
+\frac{\tilde{\gamma}(\arctan s)}{(s ^2 +1)^{3/2}}
$$
\begin{equation}\label{eq:gppsigma}
 = \frac{\tilde{\gamma}(\arctan s )+\tilde{\gamma}''(\arctan s)}{(s ^2 +1)^{3/2}}.
\end{equation}

The tangent space $T_{\gamma(s)}K$ splits as a direct sum
\[
  T_{\gamma(s)}K \;=\; V_1 \oplus V_2,
\]
where
\[
  V_1 \;=\; T_{\gamma(s)}
  \left(K \cap \mathbb{S}^N (\|\gamma(s)\|) \right),
  \qquad
  V_2 \;=\; \mathrm{span}\{\gamma(s)\}.
\]
Here $\mathbb{S}^N (\|\gamma(s)\|)$ denotes the sphere centered at $O$
with radius $\|\gamma(s)\|$.
Note that $V_1$ is naturally identified with 
$T_{\tilde{\gamma}(\tilde{s})}\Sigma$ via the radial projection.

Since $\tilde{\gamma}$ is a geodesic on $\Sigma$, 
$\tilde{\gamma}''(\arctan s)$ is orthogonal to $T_{\tilde{\gamma}(\tilde{s})}\Sigma$, hence to $V_1$.
Together with the fact that $\tilde{\gamma}(\arctan s)$ is also orthogonal to $V_1$, 
\eqref{eq:gppsigma} implies that $\gamma''(s)$ is orthogonal to $V_1$.

It remains to check that $\gamma''(s)$ is orthogonal to $V_2$; 
for this, we compute $\langle \gamma''(s),\gamma(s)\rangle$ and show that it vanishes.

Since $\langle \tilde\gamma(\tilde s),\tilde\gamma(\tilde s)\rangle = 1$, we have
\[
\left\langle \tilde\gamma'(\tilde s),\tilde\gamma'(\tilde s)\right\rangle
+
\left\langle \tilde\gamma''(\tilde s),\tilde\gamma(\tilde s)\right\rangle
=0,
\]
hence
\begin{equation}\label{eq:gammagammapp}
\left\langle \tilde\gamma''(\tilde s),\tilde\gamma(\tilde s)\right\rangle
=
-\left\langle \tilde\gamma'(\tilde s),\tilde\gamma'(\tilde s)\right\rangle
= -1.
\end{equation}
Using \eqref{eq:lift1}, \eqref{eq:gppsigma} and \eqref{eq:gammagammapp},
$$
\langle \gamma''(s),\gamma(s)\rangle
=
\left\langle
\frac{\tilde{\gamma}(\arctan s )+\tilde{\gamma}''(\arctan s)}{(s ^2 +1)^{3/2}},
\tilde{\gamma}(\arctan s) \sqrt{s^2+1}
\right\rangle
$$
$$
=\frac{
  \left\langle
\tilde{\gamma}(\tilde s )
,
\tilde{\gamma}(\tilde s) 
\right\rangle
+
\left\langle
\tilde{\gamma}''(\tilde s),
\tilde{\gamma}(\tilde s)
\right\rangle
}{s^2+1}
=\frac{1+(-1)}{s^2+1}=0.
$$

Thus $\gamma''(s)$ is orthogonal to $T_{\gamma(s)}K$, and therefore $\gamma(s)$
is a geodesic on $K$.
From (\ref{eq:gpsigma}) it follows that 
$$
\|\gamma'(s)\|= \|\gamma'(0)\|= \|\tilde{\gamma}'(0)\|=1.
$$

Finally, it is immediate from \eqref{eq:lift} that 
$\gamma(s)$ touches $\mathbb{S}^N$ at $\tilde{\gamma}(0)=\gamma(0)$.

3)
Since $\Sigma$ is a closed Riemannian manifold, it is geodesically complete. By 1) the curve $\tilde{\gamma}(\tilde{s})$ is an interval of geodesic on $\Sigma$,
hence the follow limits hold
\[
\lim_{\tilde s \to \frac{\pi}{2}} \tilde{\gamma}(\tilde{s}) = \tilde{\gamma}\left(\frac{\pi}{2}\right) , 
\quad
\lim_{\tilde s \to - \frac{\pi}{2}} \tilde{\gamma}(\tilde{s}) = \tilde{\gamma}\left(-\frac{\pi}{2}\right) .
\]
From (\ref{eq:radialprojection}), \eqref{eq:parameter}, we have
\[
 \lim_{s \to + \infty} \frac{\gamma(s)}{\|\gamma(s)\|}
= \lim_{\tilde s \to \frac{\pi}{2}} \tilde{\gamma}(\tilde{s}) = \tilde{\gamma}\left(\frac{\pi}{2}\right) ,
\quad
\lim_{s \to - \infty} \frac{\gamma(s)}{\|\gamma(s)\|}
= \lim_{\tilde s \to - \frac{\pi}{2}} \tilde{\gamma}(\tilde{s}) = \tilde{\gamma}\left(-\frac{\pi}{2}\right) .
\]

Next we show that
\[
\lim_{s \to +\infty} 
\gamma'(s)
= \tilde{\gamma}\left(\frac{\pi}{2}\right), 
\qquad
\lim_{s \to -\infty} 
{\gamma'(s)}
= -\tilde{\gamma}\left(-\frac{\pi}{2}\right).
\]
Using  $\|\gamma'(s)\|=1$ and (\ref{eq:gamma-gammaprime}) let us compute
$$
\left\| 
\gamma'(s) -  \frac{\gamma(s)}{\|\gamma(s)\|}
\right\|^2
=\|\gamma'(s)\|^2 - 2 
\frac{  \langle \gamma'(s) , \gamma(s)\rangle} {\|\gamma (s)\| }+ \frac{\|\gamma(s)\|^2}{\|\gamma(s)\|^2}
= 2 - 2\frac{ s}{ \sqrt{ s^2 +1}} \to 0 \quad \text{as } s\to +\infty.
$$
Therefore 
$$
\lim_{s \to +\infty} \gamma'(s) 
=
\lim_{s \to +\infty} \frac{\gamma(s)}{\|\gamma(s)\|}= \tilde{\gamma}\left(\frac{\pi}{2}\right). 
$$
Similarly, 
$$
\left\| 
\gamma'(s) +  \frac{\gamma(s)}{\|\gamma(s)\|}
\right\|^2
=\|\gamma'(s)\|^2 + 2 
\frac{  \langle \gamma'(s) , \gamma(s)\rangle} {\|\gamma (s)\| }+ \frac{\|\gamma(s)\|^2}{\|\gamma(s)\|^2}
= 2 + 2\frac{ s}{ \sqrt{ s^2 +1}} \to 0 \quad \text{as } s\to -\infty,
$$
and hence 
$$
\lim_{s \to -\infty} 
\gamma'(s)
= 
- \lim_{s \to -\infty} \frac{\gamma(s)}{\|\gamma(s)\|} = - \tilde{\gamma}\left(-\frac{\pi}{2}\right). 
$$

Lemma~\ref{lem:cone-sphere-correspondence} is proved.
 \end{proof}

\section{Proof of Theorem  \ref{lem:integral} }

Let us define two subsets of  $T^1K$:
\[
  \begin{aligned}
    \mathcal{T}
&= \left\{ (x, v) \in T^1 K  \mid  x \neq k v \text{ for any } k \in \mathbb{R} \right\},\\
\mathcal{S}
&= \left\{ (x, v) \in T^1 K \mid   x=k v \text{ for some } k \in \mathbb{R} \right\}.
  \end{aligned}
\]
Then  $\mathcal{T}\cup\mathcal{S} = T^1 K$ and $\mathcal{T}$ is an open dense subspace of $T^1 K$.
By Lemma \ref{lem:two-types},
any element $(x,v) \in \mathcal{T}$ defines a non-radial geodesic $\gamma(s)$ with $\gamma(0)=x$ and $\gamma'(0)=v$.

In this section, we  prove Theorem  \ref{lem:integral} by first constructing smooth integrals on $\mathcal{T}$ (see Lemma~\ref{lem:smooth-integrals}) 
and then modify them to obtain integrals smooth on 
$T^1K^+ \cup T^1K^-$ and continuous on $T^1K$ (see Lemma~\ref{lem:continuous-integrals}).

\subsection{Construction of first integrals}
For $(x,v)\in\mathcal T$, let $\gamma_{x,v}(s)$ denote the geodesic with $\gamma_{x,v}(0)=x$ and $\gamma'_{x,v}(0)=v$. 
From Lemma~\ref{lem:cone-sphere-correspondence}, it follows that any non-radial geodesic $\gamma_{x,v}(s)$ on $K$ touches the sphere
\[
\mathbb{S}^N(\sqrt{I}) = \{ x \in \mathbb{R}^{N+1} : \|x\|^2= I \}
\]
at a unique point $\gamma_{x,v}(s_0)$, where $s_0=s_0(x,v)$ is a function depends on $(x,v)$ (see Fig.~\ref{fig:geo-s0}). 
Since the cone $K$ is embedded in $\mathbb{R}^{N+1}$,
both the position vector $\gamma_{x,v}(s_0)$ and the velocity vector $\gamma_{x,v}'(s_0)$
can be naturally regarded as vectors in $\mathbb{R}^{N+1}$.
\begin{figure}[htbp]
  \begin{center}
  \includegraphics[scale=0.26]{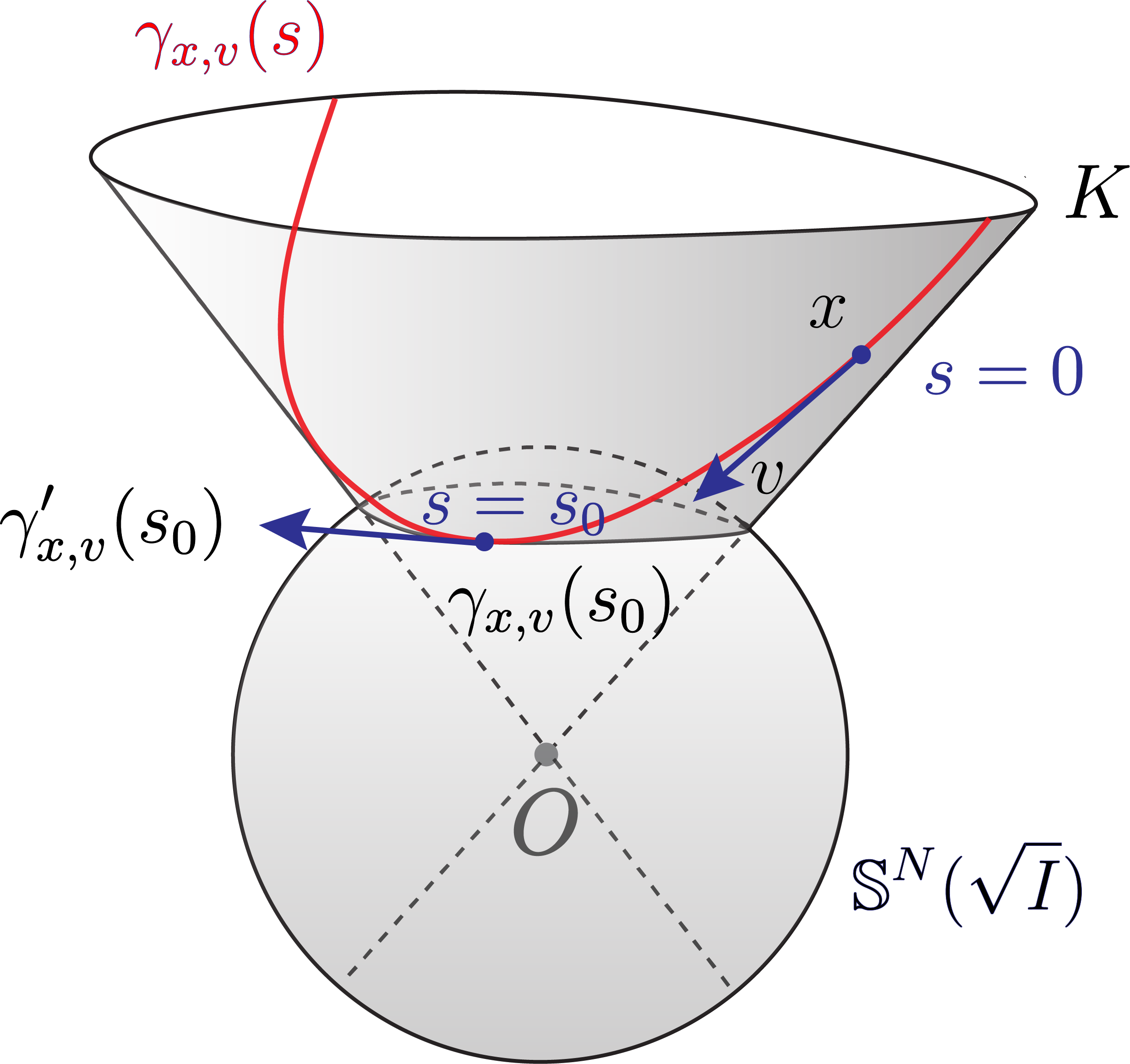}
  \end{center}
  \caption{The geodesic $\gamma_{x,v}(s)$ touches $\mathbb{S}^N(\sqrt{I})$ at the unique point $\gamma_{x,v}(s_0)$.}
  \label{fig:geo-s0}
\end{figure} 
Let us define functions on $\mathcal{T}\subset T^1K$:
\begin{equation}\label{eq:many-integrals1}
  J_j(x,v) := \gamma_{x,v}^j(s_0(x,v)) , \qquad 
  j = 1, \dots, N+1,
\end{equation}
\begin{equation}\label{eq:many-integrals2}
  J_{N+1+j}(x,v) := \gamma_{x,v}^{\prime\, j}(s_0(x,v)) , \qquad 
  j = 1, \dots, N+1.
\end{equation}
Here the superscript $j$ denotes the $j$-th Euclidean coordinate of a vector in $\mathbb{R}^{N+1}$.
By construction, the functions $J_1, \dots, J_{2N+2}$ are constant
along each geodesic trajectory in $\mathcal{T}$ and therefore form first integrals of the geodesic flow restricted to $\mathcal{T}$.

The next lemma establishes the smoothness of integrals $J_1, \dots, J_{2N+2}$ on 
$\mathcal{T}$ and shows that they uniquely determine non-radial geodesics.

\begin{lemma}\label{lem:smooth-integrals}
The functions $J_1, \dots, J_{2N+2}$ are $C^1$-smooth first integrals of the geodesic flow restricted to $\mathcal{T}$.
Moreover, the image point
\[
\mathcal{J} = (J_1, \dots, J_{2N+2}) \in \mathbb{R}^{2N+2}
\]
uniquely determines a non-radial geodesic.
\end{lemma}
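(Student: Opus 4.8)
The plan is to treat the two assertions separately: the first integral property, which is essentially formal, and then the smoothness and the injectivity, both of which rest on the regularity of the geodesic flow together with the transversality identity $\partial_s\langle\gamma,\gamma'\rangle=\|\gamma'\|^2$.

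For the \emph{first integral property} I would argue by reparametrization invariance. If $(x',v')=\rho^{s_1}(x,v)$ lies on the same flow orbit, then $\gamma_{x',v'}(s)=\gamma_{x,v}(s+s_1)$, so the two geodesics share the same trace and hence the same unique tangency point with $\mathbb{S}^N(\sqrt I)$. Consequently $s_0(x',v')=s_0(x,v)-s_1$ and $\gamma_{x',v'}(s_0(x',v'))=\gamma_{x,v}(s_0(x,v))$, and likewise for the velocities; thus each $J^j$ is constant on orbits, confirming what was asserted by construction.

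For \emph{smoothness} I would first record the regularity of the flow. Since $\Gamma$ is $C^3$, the parametrization $r(t,u)=t\,\iota(u)$ of $K$ is $C^3$, so the induced metric is $C^2$, the Christoffel symbols are $C^1$, and the geodesic vector field on $TK$ is $C^1$; by smooth dependence on initial data the flow $\rho^s(x,v)$, and with it $\gamma_{x,v}(s)$ and $\gamma'_{x,v}(s)$, is jointly $C^1$ in $(s,x,v)$. Next I would characterize $s_0$ implicitly by setting
\[
\phi(s,x,v)=\langle\gamma_{x,v}(s),\gamma'_{x,v}(s)\rangle .
\]
By part 1) of Theorem \ref{lem:cone-sphere-correspondence} the tangency point is the unique zero of $s\mapsto\phi(s,x,v)$. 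Because the position vector is tangent to the cone, one has $\langle\gamma,\gamma''\rangle=0$ along a geodesic, so
\[
\partial_s\phi=\|\gamma'\|^2+\langle\gamma,\gamma''\rangle=1\neq 0 .
\]
The implicit function theorem then yields $s_0\in C^1(\mathcal T)$ (and, since $\phi$ is strictly increasing in $s$, $s_0$ is even globally well defined), and composing with the $C^1$ flow shows that each $J^j(x,v)=\gamma_{x,v}^j(s_0(x,v))$ and $J^{N+1+j}(x,v)=\gamma_{x,v}^{\prime\,j}(s_0(x,v))$ is $C^1$ on $\mathcal T$.

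Finally, for \emph{injectivity} I would reconstruct the geodesic from a value of $\mathcal J$. A value $\mathcal J=(P,W)$ supplies the tangency point $P=(J^1,\dots,J^{N+1})$ and the velocity $W=(J^{N+2},\dots,J^{2N+2})$ there, with $\|P\|^2=I>0$ and $\langle P,W\rangle=0$; in particular $P\neq kW$ for any $k$, so $(P,W)\in\mathcal T\subset T^1K$ is a genuine point of the unit tangent bundle. By existence and uniqueness of geodesics through a prescribed point and direction, $(P,W)$ determines a single geodesic on $K$; hence two non-radial geodesics with the same $\mathcal J$ pass through the common point $(P,W)\in T^1K$ and therefore coincide. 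I expect the only real technical point to be the regularity bookkeeping—confirming that the bare $C^3$ hypothesis on $\Gamma$ already makes the flow $C^1$—while the transversality $\partial_s\phi=1$ renders both the implicit function step and the global well-definedness of $s_0$ essentially automatic.
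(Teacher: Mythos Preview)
Your argument is correct and follows essentially the same route as the paper: both use the implicit function theorem applied to $\phi(s,x,v)=\langle\gamma_{x,v}(s),\gamma'_{x,v}(s)\rangle$ with $\partial_s\phi=\|\gamma'\|^2>0$ to obtain $s_0\in C^1$, then conclude by composition and by uniqueness of solutions to the geodesic initial value problem. Your version is in fact slightly more detailed than the paper's---you spell out the regularity bookkeeping from the $C^3$ hypothesis and explicitly verify that $(P,W)\in\mathcal T$---but the substance is the same.
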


\begin{proof}
By Lemma~\ref{lem:two-types}, the open subset \( \mathcal{T} \subset T^1 K^+ \cup T^1 K^- \) is invariant under the geodesic flow \( \rho^s \). 
The geodesic flow restricted to $\mathcal{T}$ corresponds to a $C^1$ vector field on \( T\mathcal{T} \):
$$
X(\rho^s(x,v)) = \partial_s \rho^s(x,v).
$$
By the theory of ordinary differential equations, 
the flow \( \rho^s(x,v) \) depends \( C^1 \)-smoothly on the initial data \( (x,v)\in\mathcal{T} \). 
Moreover, since the vector field \( X \) is \( C^1 \),  \( \rho^s(x,v) \) is \( C^1 \) with respect to the parameter \( s \).

 Define the auxiliary funtion 
  \[
  \phi(x,v,s) := \langle \gamma_{x,v}(s), \gamma'_{x,v}(s) \rangle.
  \]
  From the proof of part 1) of Lemma~\ref{lem:cone-sphere-correspondence}, 
  for each $(x,v)$,  $s_0=s_0(x,v)$ is the unique value of the parameter such that 
  \begin{equation}\label{eq:phixvs0}
  \phi(x,v,s_0)=0.
\end{equation}
  Since $\partial_s \phi(x,v,s)|_{s=s_0} = \|\gamma'_{x,v}(s_0)\|^2 = \|v\|^2 =1 > 0$ and $\phi$ is $C^1$ in $(x,v,s)$, the implicit function theorem implies that $s_0(x,v)$, 
  founded from (\ref{eq:phixvs0}),
  is $C^1$ with respect to $(x,v)$ 
  (later we will prove that $s_0(x,v)=-\langle x, v\rangle$, see (\ref{eq:xv})).
  
  By the construction (\ref{eq:many-integrals1}) and (\ref{eq:many-integrals2}), it follows that
  $
  (J_1(x,v), \dots, J_{2N+2}(x,v))
  $
  depend $C^1$-smoothly of $(x,v)$, as they are compositions of two $C^1$ maps. This proves the $C^1$-smoothness of $J_1,\dots,J_{2N+2}$ on $\mathcal T$.
  
  Finally, to see that $\mathcal J = (J_1,\dots,J_{2N+2})$ uniquely determines a non-radial geodesic, note that 
  $\mathcal{J}$ gives 
  the position $x_0=(J_1,\dots,J_{N+1})$ and velocity $v_0=(J_{N+2},\dots,J_{2N+2})$ at the unique parameter value where $\langle \gamma_{x,v}(s), \gamma'_{x,v}(s)\rangle=0$. 
  Since the geodesic initial value problem has a unique solution, this data determines a unique geodesic. 
\end{proof}

Let us observe that the first \(N+1\) integrals \((J_1,\dots,J_{N+1})= \gamma_{x,v}(s_0)\) can be extended continuously to \(\mathcal S\). 
Indeed, when $v$ tends to the direction of $x$, 
the point $\gamma_{x,v}(s_0)$ tends to $O$, because $I$ tends to $0$. In the same time
  \((J_{N+2},\dots,J_{2N+2})=\gamma_{x,v}'(s_0)\) cannot be extended continuously to \(\mathcal S\), because there is no limit of $\gamma_{x,v}'(s_0)$ when $v$ tends to the direction of $x$ (more precisely the limit depends on how $v$  tends to the direction of $x$). 
  To obtain integrals that are continuous on  $T^1 K$ and $C^1$ on $T^1K^+ \cup T^1K^-$, 
let us define 
\[
  \mathcal{I} \colon T^{1}K \to \mathbb{R}^{2N+2}
\]
by
\begin{equation}\label{eq:I-integrals}
  \mathcal{I}(x, v) =
\begin{cases}
e^{-\frac{1}{I^2(x,v)}} \bigl( \gamma_{x,v}(s_0(x,v)), \gamma_{x,v}'(s_0(x,v)) \bigr), & \text{for } (x, v) \in \mathcal{T}, \\[8pt]
0, & \text{for } (x, v) \in \mathcal{S}.
\end{cases}
\end{equation}
Let $I_k$ denote the $j$-th Euclidean coordinate of $  \mathcal{I}(x, v)$.

\begin{lemma}\label{lem:continuous-integrals}
  The functions $I_1, \dots, I_{2N+2}$ defined by \eqref{eq:I-integrals} are first integrals of the geodesic flow on $T^1 K$ with the following properties:
  \begin{enumerate}
      \item[1)] They are continuous on the entire unit tangent bundle $T^1 K$;
      \item[2)] They are $C^{1}$-smooth on the smooth part $T^{1}K^{+} \cup T^{1}K^{-}$ of $T^1 K$;
      \item[3)] The map $\mathcal{I} = (I_1, \dots, I_{2N+2})$ uniquely determines any non-radial geodesic (where $\mathcal{I} \neq 0$), while mapping all radial geodesics to the origin $0 \in \mathbb{R}^{2N+2}$.
  \end{enumerate}
\end{lemma}
The proof of   2) is lengthy and will be given in the next section.
Below, we provide the proofs for  1) and 3).

\begin{proof}[Proof of parts 1) and 3) of Lemma~\ref{lem:continuous-integrals}]
 First we prove 3).  
  By (\ref{eq:I-integrals}),
  $$
I_k= e^{-\frac{1}{I^2}} J_k, \quad \text{on } \mathcal{T}.
  $$ 
  Since $I$ and $J_k$ are first integrals of the geodesic flow on $\mathcal{T}$, any function of them is also a first integral. Thus, $I_k$ is constant along non-radial geodesics.
  For radial geodesics (i.e., on $\mathcal{S}$), $I_k$ is identically zero.
  Therefore, the functions $I_k$ are first integrals on $T^1 K$. 
  Furthermore, the condition $I_k=0$ on $\mathcal{S}$ implies that all radial geodesics are mapped by $\mathcal{I}$ to the origin $0 \in \mathbb{R}^{2N+2}$.

  Given a value $\mathcal{I} = (I_1, \dots, I_{2N+2}) \neq 0$ on $\mathcal{T}$, one can uniquely recover the values of the integrals $J_k$. Indeed, consider the sum of the square of the first $N+1$ components:
    \[
      \sum_{k=1}^{N+1} (I_k)^2 = \sum_{k=1}^{N+1} \left( e^{-\frac{1}{I^2}} J_k \right)^2 = e^{-\frac{2}{I^2}} \sum_{k=1}^{N+1} (J_k)^2= I^2 e^{-\frac{2}{I^2}}.
\]
Consider the function $f(t) = t e^{-2/t}$ for $t > 0$ (where $t=I^2$). Its derivative is $f'(t) = e^{-2/t}(1 + 2/t)$, which is strictly positive. Thus, $f$ is strictly increasing and bijective onto its image. Consequently, the value $f(I^2)$ uniquely determines $I^2$ (and hence $I$). Once $I$ is determined, the values of $J_k$ are recovered by
\[
J_k = e^{\frac{1}{I^2}} I_k, \quad k = 1,\dots,2N+2.
\]
  By Lemma~\ref{lem:smooth-integrals}, the value of  $\mathcal{J}=(J_1,\dots,J_{2N+2})$ uniquely determines a non-radial geodesic.  
    Hence $\mathcal{I}$ uniquely determines a non-radial geodesic whenever $\mathcal{I} \neq 0\in \mathbb{R}^{2N+2}$.  
  The assertion 3) is proved.

Next we prove the assertion 1).
    Let $(x_n,v_n) \in \mathcal T$ be a sequence converging to a point $(x_0,v_0) \in \mathcal S$.  
    To prove the continuity of $\mathcal{I}$ on $\mathcal{S}$ we will show that 
    $$
I_k(x_n, v_n)\to 0 = I_k(x_0, v_0) 
\quad \text{as~} n \to + \infty,
\quad k=1,\dots, 2N+2.
    $$
    Observe that
    \begin{equation}\label{eq:J-bounds}
      \|(J_1(x,v), \dots, J_{N+1}(x,v))\| \le \|x\|, \qquad
      \|(J_{N+2}(x,v), \dots, J_{2N+2}(x,v))\| = \|v\|.
    \end{equation}
    Hence, 
    \[
    |I_k(x_n,v_n)| =e^{-\frac{1}{I^2(x_n,v_n)}}\, |J_k(x_n,v_n)| \le e^{-\frac{1}{I^2(x_n,v_n)}}\, \left(
      \|x_n\| +  \|v_n\|
     \right).
    \]
    Evaluating (\ref{eq:distance-integral}) in Lemma~\ref{lem:distance} at $s=0$ and using $\|v\|= 1$, we obtain the explicit formula
\begin{equation}\label{eq:explicit-i}
  I(x,v) = \langle x,x \rangle - \langle x,v \rangle^2,
\end{equation}
which implies the continuity of $I$ on $T^1K$. By Lemma~\ref{lem:two-types}, we have $I(x_0, v_0)=0$; thus, continuity implies that $I(x_n, v_n) \to 0$ as $(x_n, v_n) \to (x_0, v_0)$.
Since $e^{-\frac{1}{I^2}} \to  0$ as $I \to 0$, and the term $(\|x_n\| + \|v_n\|)$ is bounded, it follows that
\[
  e^{-\frac{1}{I(x_n,v_n)^2}}\,\bigl( \|x_n\| + \|v_n\| \bigr) \to 0 \quad \text{as } n \to \infty.
\]
Therefore 
    \[
    I_k(x_n,v_n) \to 0 = I_k(x_0,v_0).
    \]
    Thus each $I_k$ is continuous on $\mathcal S$.    
   The assertion 1) is proved.
\end{proof}

\subsection{Proof of smoothness of the first integrals in Theorem \ref{lem:integral}}
In this section, we will prove  part 2)  of Lemma~\ref{lem:continuous-integrals}, i.e., that the map 
$\mathcal{I} \colon T^{1}K \to \mathbb{R}^{2N+2}$ defined by (\ref{eq:I-integrals}) is $C^{1}$-smooth on 
$T^{1}K^{+} \cup T^{1}K^{-}$. Since $T^{1}K^{+}$ and $T^{1}K^{-}$ are disjoint open sets, and they are isometric via the map $p \mapsto -p$, it suffices to prove that $\mathcal{I}$ is $C^{1}$-smooth on $T^{1}K^{+}$.

This result constitutes the most technically involved part of Lemma~5 (equivalently, Theorem~1).
In order to establish the \(C^{1}\)-smoothness of \(\mathcal{I}\), we show that the norm of the Riemann curvature tensor on \(K^{+}\) is of order
\(O(t^{-2})\), and the norm of second fundamental form is of order \(O(t^{-1})\) as
\(t \to 0\) (that is, as the point \(tp\) approaches \(O\)); see Lemma~7.
In Lemma~8, we establish estimates for Jacobi fields along geodesics in \(K^{+}\).
With these preparatory results, 
 Lemma~10 provides a detailed estimation of the behavior of \(d\mathcal{I}\) in a
neighborhood of \(\mathcal{S}\cup T^{1}K^{+}\). 

Following the notation of \cite{ON}, the cone \(K^{+}\) is isometric to the product manifold \((0,\infty)\times\Sigma\) equipped with the Riemannian metric
\[
g = dt^{2} + t^{2} g_{\scriptscriptstyle \Sigma},
\]
where \(g_{\scriptscriptstyle \Sigma}\) denotes the Riemannian metric on \(\Sigma\), and \(t\) is the coordinate on \((0,\infty)\).
Let $R$ denote the Riemann curvature tensor of the manifold $K^{+} \cong (0,\infty) \times \Sigma$.
  For any point $(t, p) \in K^+$, $t\in(0,\infty)$,
  $p\in\Sigma$,
   we decompose the tangent space $T_{(t,p)}K^+$ into the radial subspace $\mathcal{V} = \operatorname{span}\{\partial_t\}$ and the horizontal subspace $\mathcal{H}_{(t,p)}$ (which is tangent to the slice $\{t\} \times \Sigma$):
  $$
T_{(t,p)}K^+ = \mathcal{V}\oplus \mathcal{H}_{(t,p)}.
  $$
  Let 
  \[
  \pi: (0, \infty) \times \Sigma \to \Sigma
  \] 
  be the natural projection onto the second factor, $\pi(t,p) = p$.
  Then we have the linear isomorphism
  $$
  d\pi|_{\mathcal{H}_{(t,p)}} : \mathcal{H}_{(t,p)} \to T_p \Sigma.
  $$
  We denote its inverse map (the \textit{horizontal lift}) by 
  \[
  L_{(t,p)}: T_p\Sigma \to \mathcal{H}_{(t,p)}.
  \]
  Let $\|\cdot\|$ and $\|\cdot\|_{\Sigma}$ denote the norms with respect to $g$ and $g_{\scriptscriptstyle \Sigma}$ respectively. 
  For any horizontal vector 
  $u \in \mathcal{H}_{(t,p)}$, we have the scaling relation
\begin{equation}\label{eq:scale-by-t}
  \|u\| = t \, \|d\pi(u)\|_{\Sigma},
\end{equation}
or equivalently, for any vector $v \in T_p \Sigma$, 
\begin{equation}\label{eq:scale-by-t1}
  \|  L_{(t,p)}(v)\| = t \, \|v\|_{\Sigma}.
\end{equation}
  Let $R_{\Sigma}$ denote the Riemann curvature tensor of $(\Sigma, g_{\scriptscriptstyle \Sigma})$. 
  The \textit{lifted curvature tensor} $R_{(t,\Sigma)}$ on the horizontal subspace $\mathcal{H}_{(t,p)}$ is defined by
\begin{equation}\label{eq:lift-by-t}
  R_{(t,\Sigma)}(u, v)w := L_{(t,p)}\Big( R_{\Sigma}\big( d\pi(u), d\pi(v) \big) d\pi(w) \Big) \in \mathcal{H}_{(t,p)}
\end{equation}
for $u,v,w \in \mathcal{H}_{(t,p)}$ (see \cite{ON}).

\medskip

The next lemma is an immediate corollary of Prop. 7.42 of \cite{ON}.
\begin{lemma}\label{lem:cone-curvature-formula}
  \begin{enumerate}
      \item 
      If any of the vectors $u, v, w \in T_{(t,p)}K$ belongs to $\mathcal{V}$, then
      \[
      R(u, v) w = 0.
      \]
      
      \item 
      If $u, v, w \in \mathcal{H}_{(t,p)}$, then
      \[
      R(u, v) w = R_{(t,\Sigma)}(u, v)w - \frac{1}{t^2} \big( \langle u, w \rangle v - \langle v, w \rangle u \big).
      \]
  \end{enumerate}
\end{lemma}
Indeed, 
  note that $(K^+,g)$ corresponds to the warped product $B\times_f F$, with base $B=\big((0,\infty), dt^2\big)$, fiber $F=(\Sigma, g_{\scriptscriptstyle \Sigma})$, and warping function $f(t)=t$ (see \cite[Definition 7.33]{ON}).

  Since the base is $1$-dimensional, by the skew-symmetry of the first two entries of the Riemann curvature tensor, we have
  $$
  R(\partial_t, \partial_t) \partial_t = 0, 
  \quad 
  R(\partial_t, \partial_t) u = 0, 
  \quad 
  \forall u \in \mathcal{H}_{(t,p)}.
  $$
  Substituting $f(t)=t$, $X=Y= \partial_t$, and $V= u\in \mathcal{H}_{(t,p)}$ into (2) of \cite[Proposition 7.42]{ON}, we have 
  $$
  R(u, \partial_t) \partial_t = \frac{H^f(\partial_t, \partial_t)}{f} u = 0,
  $$
where $H^f$ denotes the Hessian tensor of $f$. In our case, this term vanishes because
  $$H^f(\partial_t, \partial_t) = \partial_t (\partial_t f)- (\nabla_{\partial_t} \partial_t)f =0.
  $$
  Next, substituting $f(t)=t$, $X= \partial_t$, $V=v \in \mathcal{H}_{(t,p)}$, and $W=w \in \mathcal{H}_{(t,p)}$ into (4) of \cite[Proposition 7.42]{ON}, we obtain 
  $$
  R(\partial_t, v) w 
  = \frac{\langle v,w \rangle}{f} \nabla_{\partial_t}(\text{grad} f) 
  = \frac{\langle v,w \rangle}{f} \nabla_{\partial_t}\partial_t
  = 0.
  $$
  Moreover, (3) of \cite[Proposition 7.42]{ON} gives
  $$
  R(u, v) \partial_t = 0, 
  \quad 
  \forall u,v \in \mathcal{H}_{(t,p)}.
  $$
  Therefore, the first assertion of Lemma~\ref{lem:cone-curvature-formula} is verified.

  To prove the second assertion of Lemma~\ref{lem:cone-curvature-formula}, using (5) of \cite[Proposition 7.42]{ON}, we have
  \[
    \begin{aligned}
      R(u,v)w &= R_{(t,\Sigma)}(u,v)w - \frac{\langle \text{grad} f, \text{grad} f \rangle}{f^2} \big( \langle u,w \rangle v - \langle v,w \rangle u \big)\\
      & =
      R_{(t,\Sigma)}(u, v)w - \frac{1}{t^2} \big( \langle u, w\rangle v - \langle v, w\rangle u \big).
    \end{aligned}
  \]
  Here we have used $\langle \text{grad} f, \text{grad} f \rangle = \langle \partial_t, \partial_t \rangle = 1$.

\begin{lemma}\label{lem:cone-curvature}
  There exists a constant $C>0$ (depending only on $\Sigma$ and its embedding in $\mathbb{S}^{N} \subset \mathbb{R}^{N+1}$) such that for every $(t,p)\in K^+$ and all tangent vectors $u,v,w\in T_{(t,p)}K^+$, the following estimates hold:
  \begin{itemize}
    \item[1)] The Riemann curvature tensor of $K^+$ satisfies:
    \[
    \|R(u,v)w\| \le \frac{C}{t^{2}} \, \|u\| \, \|v\| \, \|w\|.
    \]
    \item[2)] The second fundamental form of the embedding $K^+ \hookrightarrow \mathbb{R}^{N+1}$ satisfies:
    \[
      \|\text{II}(u, v)\| \le \frac{C}{t} \, \|u\| \, \|v\|.
    \]
  \end{itemize}
\end{lemma}

\begin{proof}
  We decompose each vector $u, v, w $ into its radial and horizontal components:
  \[
  u = u^r \partial_t + u^h, \quad v = v^r \partial_t + v^h, \quad w = w^r \partial_t + w^h,
  \]
  where $u^r, v^r, w^r \in \mathbb{R}$ and $u^h, v^h, w^h \in \mathcal{H}_{(t,p)}$. Since the decomposition is orthogonal, we have $\|u^h\| \le \|u\|$, $\|v^h\| \le \|v\|$, and $\|w^h\| \le \|w\|$.

  \medskip
  \textbf{1) Estimate for the Riemann curvature tensor.}

  By part 1) of Lemma \ref{lem:cone-curvature-formula} and the multilinearity of the curvature tensor, we have 
  $$
  R(u, v) w = R(u^h, v^h) w^h.
  $$
  Using part 2) of Lemma \ref{lem:cone-curvature-formula}, we estimate the norm:
  \begin{equation}\label{eq:ineq3}
    \|R(u,v)w\| =\|R(u^h,v^h)w^h\|  \le \|R_{(t,\Sigma)}(u^h, v^h)w^h\| + \frac{1}{t^2} \left\| \langle u^h, w^h \rangle v^h - \langle v^h, w^h\rangle u^h \right\|.
  \end{equation}
  Using the triangle inequality and the Cauchy-Schwarz inequality, the second term satisfies:
  \begin{equation}\label{eq:est1}
    \begin{aligned}
      \frac{1}{t^2} \left\| \langle u^h, w^h\rangle v^h - \langle v^h, w^h\rangle u^h \right\|
      &\le \frac{1}{t^2} \left( \|u^h\|\|w^h\|\|v^h\| + \|v^h\|\|w^h\|\|u^h\|\right) \\
      &= \frac{2}{t^2} \, \|u^h\|\,\|v^h\|\,\|w^h\|.
    \end{aligned}
    \end{equation}
  For the first term, by the definition of the lift (\ref{eq:lift-by-t}) and the metric scaling (\ref{eq:scale-by-t1}), we have:
  \begin{equation}\label{eq:eq1}
    \|R_{(t,\Sigma)}(u^h, v^h)w^h\| = 
    \|L_{(t,p)}\left( R_{\Sigma}(d\pi u^h, d\pi v^h)d\pi w^h \right)\|
    =t \cdot \|R_{\Sigma}(d\pi u^h, d\pi v^h)d\pi w^h\|_{\Sigma}.   
  \end{equation}
  Since $\Sigma$ is compact, we can define 
  \[
    C_{\Sigma} = 
    \sup_{
        p\in \Sigma,\;
        x,y,z\in T_p^1\Sigma
    }
    \|R_{\Sigma}(x,y)z\|_{\Sigma}.
  \] 
  Using (\ref{eq:scale-by-t}), (\ref{eq:eq1}) we obtain:
  \begin{equation}\label{eq:est2}
      \|R_{(t,\Sigma)}(u^h, v^h)w^h\| \le t \cdot C_{\Sigma} \left( \frac{\|u^h\|}{t} \right) \left( \frac{\|v^h\|}{t} \right) \left( \frac{\|w^h\|}{t} \right) = \frac{C_{\Sigma}}{t^2} \, \|u^h\|\,\|v^h\|\,\|w^h\|.
  \end{equation}
  Combining (\ref{eq:ineq3}), (\ref{eq:est1}), and (\ref{eq:est2}), we get:
  \[
  \|R(u,v)w\| \le \frac{C_{\Sigma} + 2}{t^2} \, \|u^h\|\,\|v^h\|\,\|w^h\| \le \frac{C_{\Sigma} + 2}{t^2} \, \|u\|\,\|v\|\,\|w\|.
  \]

\medskip

\textbf{2) Estimate for the second fundamental form.}
   We choose local coordinates $z=(z^1, \dots, z^n)$ on an open set 
  $\mathcal{U} \subset \mathbb{R}^n$ and a parametrization 
\[
\psi: \mathcal{U} \to \Sigma \hookrightarrow \mathbb{R}^{N+1}.
\]
Let us consider the local coordinates $(t, z)$ on the cone $K^+$ and the parametrization
\[
  \Phi(t, z) := t \psi(z) : (0, \infty)\times  \mathcal{U}  \to K^+ \hookrightarrow \mathbb{R}^{N+1}.
\]
Let
$$
E_0 = E_0(t,z) = \frac{\partial \Phi(t, z)}{\partial t} 
= \psi(z), 
\quad 
E_i = E_i(t,z) = \frac{\partial \Phi(t, z)}{\partial z^i} 
= t \frac{\partial \psi(z)}{\partial z^i}, 
\quad 
i=1,\dots,n 
$$
be a local frame of $T K^+$.

Let $\bar{\nabla}$ denote the  connection in the ambient Euclidean space $\mathbb{R}^{N+1}$. 
The second fundamental form of $K^+$ as a submanifold of $\mathbb{R}^{N+1}$ is defined as 
$$
\text{II}(X, Y) := 
\left( \bar{\nabla}_X Y\right)^{\perp},
$$
where $\left( \bar{\nabla}_X Y \right)^{\perp}$ denotes the orthogonal projection of $\bar{\nabla}_X Y$ onto the 
normal bundle $NK^+$.

We now compute $\text{II}(E_\alpha, E_\beta)$ at the point $(t,p)=\Psi(t,z) \in K^+$. 
First, since 
$$
  \bar{\nabla}_{E_0} E_0 = \frac{\partial^2 \Phi(t, z)}{\partial t^2} = 0,
$$
we have
\begin{equation}\label{eq:e00}
  \text{II}(E_0, E_0) = 0.
\end{equation}
Next, since 
\[
  \bar{\nabla}_{E_0} E_j 
  = \frac{\partial^2 \Phi(t, z)} {\partial t \partial z^j}
  = \frac{\partial \psi(z)}{\partial z^j}
  = \frac{1}{t} E_j,
\]
which is tangent to $K^+$,
we have
\begin{equation}\label{eq:e0j}
  \text{II}(E_0, E_j) = 0,
  \quad 
  j=1,\dots,n.
\end{equation}

Finally,
since the normal spaces $N_{(t,p)}K^+$, $N_{(1,p)}K^+$ are naturally identified with subspaces of $\mathbb{R}^{N+1}$, by considering $\text{II}(E_i(t,z), E_j(t,z))$, 
$\text{II}(E_i(1,z), E_j(1,z))$ 
both as vectors in $\mathbb{R}^{N+1}$, we will prove the identity 
\begin{equation}\label{eq:scaling-II}
  \text{II}(E_i(t,z), E_j(t,z)) = t \text{II}(E_i(1,z), E_j(1,z)).
\end{equation}
Indeed,
we have 
$$
\bar{\nabla}_{E_i} E_j  =  \frac{\partial^2 \Phi(t, z)} {\partial z^i \partial z^j}
= t \frac{\partial^2 \psi(z)}{\partial z^i \partial z^j}.
$$
Since the normal space $N_{(t,p)}K^+$ is identical to $N_{(1,p)}K^+$ as subspaces of $\mathbb{R}^{N+1}$, we have
$$
  \text{II}(E_i(t,z), E_j(t,z)) = \left( t \frac{\partial^2 \psi}{\partial z^i \partial z^j}(z) \right)^{\perp}
  = t \left(  \frac{\partial^2 \psi}{\partial z^i \partial z^j}(z) \right)^{\perp} 
  = t \text{II}(E_i(1,z), E_j(1,z)),
$$
which proves (\ref{eq:scaling-II}).

Recall that $v^h, w^h$ denote the horizontal parts of $v,w\in T_{(t,p)}K^+$ respectively. Let us write
$$
v^h = \sum_{i=1}^n v^i E_i(t,z),
\quad 
w^h = \sum_{j=1}^n w^j E_j(t,z).
$$
Using (\ref{eq:e00}), (\ref{eq:e0j}) and (\ref{eq:scaling-II}) we have
$$
  \text{II}(v,w) =\text{II}(v^h,w^h) = \sum_{i,j=1}^n v^iw^j \text{II}(E_i(t,z), E_j(t,z)) = \sum_{i,j=1}^n v^iw^j t \text{II}(E_i(1,z), E_j(1,z))
$$
\begin{equation}\label{eq:relation-t}
= t \text{II}
\left(\sum_{i=1}^n v^iE_i(1,z) , \sum_{j=1}^n w^jE_j(1,z)  \right).
\end{equation}
Since $T^1K^+|_{\Sigma}$ is compact, we can define 
\[
  C_{\text{II}} := 
  \sup_{
      p\in \Sigma,\; v_1,v_2\in T_p^1 K^+
  }
  \|\text{II}(v_1,v_2)\|.
\] 
Then we have 
\begin{equation}\label{eq:relation-t-1}
 \left\|\text{II}\left(\sum_{i=1}^n v^iE_i(1,z) , \sum_{j=1}^n w^jE_j(1,z) \right)\right\| 
 \leq 
C_{\text{II}} \left\|\sum_{i=1}^n v^iE_i(1,z) \right\|   \left\|\sum_{j=1}^n w^jE_j(1,z)\right\| .
\end{equation}
Since $E_i(1,z)= \frac{E_i(t,z)}{t}$ as vectors in $\mathbb{R}^{N+1}$, we have
\begin{equation}\label{eq:relation-t-2}
  \begin{aligned}  
    \left\|\sum_{i=1}^n v^iE_i(1,z) \right\| &= \left\|\sum_{i=1}^n v^i\frac{E_i(t,z)}{t} \right\|= \frac{\|v^h\|}{t}, \\
    \left\|\sum_{j=1}^n w^jE_j(1,z)\right\| &= \left\|\sum_{j=1}^n w^j\frac{E_j(t,z)}{t} \right\|=\frac{\|w^h\|}{t}.
  \end{aligned}
\end{equation}
Combining (\ref{eq:relation-t}),  (\ref{eq:relation-t-1}),  (\ref{eq:relation-t-2}) and using $\|v^h\|\leq\|v\|$, $\|w^h\|\leq\|w\|$, we obtain
$$
\|\text{II}(v,w)\|\leq 
t 
C_{\text{II}} \left\|\sum_{i=1}^n v^iE_i(1,z) \right\|   \left\|\sum_{j=1}^n w^jE_j(1,z)\right\| 
= t
C_{\text{II}} \frac{\|v^h\|}{t}\frac{\|w^h\|}{t} \leq \frac{1}{t}
C_{\text{II}} \|v \| \|w\|.
$$

  \medskip

  Finally, by choosing $C = \max\{C_{\Sigma} + 2,  C_{\text{II}}\}$, both estimates 1) and 2) hold.
\end{proof}

\medskip

The following lemma establishes estimates for the Jacobi field along the geodesic $\gamma_{x,v}(s)$ on the cone. 
Let $J(s)$ be the Jacobi field along the geodesic $\gamma_{x,v}(s)$, satisfying
\begin{equation}\label{eq:j-equ}
  \frac{D^2}{ds^2}J(s)
  + R\big(J(s),\gamma'_{x,v}(s)\big)\gamma'_{x,v}(s)=0,
  \end{equation}
with the initial conditions
\begin{equation}\label{eq:j-initial}
  J(0)=\xi,\qquad 
  \frac{D}{ds}J(0)=\eta,
  \qquad 
  \|\xi\|^2+\|\eta\|^2 \neq 0.
\end{equation}

\begin{lemma}\label{lem:cone-jacobi}
  Let $J(s)$ be a Jacobi field along the geodesic $\gamma_{x,v}(s)$ with initial conditions (\ref{eq:j-initial}).
  For any point $(x_0, v_0) \in \mathcal{S}$ with $x_0 \neq O$, there exists an open neighborhood $U$ of $(x_0, v_0)$ in $T^1K$ such that for all $(x, v) \in U \cap \mathcal{T}$, the following estimates hold for all $s \in \mathbb{R}$:
  \begin{align}
    \|J(s)\| &\le \sqrt{\|\xi\|^2 + \|\eta\|^2} \exp\left( \frac{C}{I(x,v)} |s| \right), 
    \label{eq:jacobi-bound-1} \\
    \left\|\frac{D}{ds}J(s)\right\| &\le \sqrt{\|\xi\|^2 + \|\eta\|^2} \exp\left( \frac{C}{I(x,v)} |s| \right). 
    \label{eq:jacobi-bound-2}
  \end{align}
\end{lemma}
\begin{proof}
  Let us use the following notation:
  \begin{equation}\label{eq:def-z}
      Z(s) := \begin{pmatrix} J(s) \\ \frac{D}{ds}J(s) \end{pmatrix},
  \end{equation}
  and define 
 $$
    \|Z(s)\|^2 := \|J(s)\|^2 + \left\|\frac{D}{ds}J(s)\right\|^2.
$$
  Differentiating $\|Z(s)\|$ with respect to $s$ yields
  \begin{equation}\label{eq:dif}
      \frac{d}{ds} \|Z(s)\| = \frac{1}{2\|Z(s)\|} \frac{d}{ds} \|Z(s)\|^2
      = \frac{\langle Z(s), \frac{D}{ds}Z(s) \rangle}{\|Z(s)\|}.
  \end{equation}
  By the Cauchy-Schwarz inequality, we deduce from (\ref{eq:dif}) that
  \begin{equation}\label{eq:est3}
    \left|\frac{d}{ds} \|Z(s)\| \right| 
    = 
    \left|
    \frac{\langle Z(s), \frac{D}{ds}Z(s) \rangle}{\|Z(s)\|}\right| 
    \le \left\|\frac{D}{ds}Z(s)\right\|.
  \end{equation}
  
 Using (\ref{eq:j-equ}) and (\ref{eq:def-z}) we have 
  $$
  \left\|\frac{D}{ds}Z(s)\right\|^2 
  =
  \left\|\frac{D}{ds}J(s)\right\|^2 + 
  \left\|\frac{D^2}{ds^2}J(s)\right\|^2 
  = \left\|\frac{D}{ds}J(s)\right\|^2 + 
  \big\| R\big(J(s),\gamma'_{x,v}(s)\big)\gamma'_{x,v}(s) \big\|^2.
  $$
  Applying Lemma \ref{lem:cone-curvature} and using $t= \|\gamma_{x,v}(s)\|$ we obtain the bound
  \[
  \left\|\frac{D}{ds}Z(s)\right\|^2 \le \left\|\frac{D}{ds}J(s)\right\|^2 + \frac{C^2}{\|\gamma_{x,v}(s)\|^4} \|J(s)\|^2 \| \gamma'_{x,v}(s) \|^4.
  \]
  Since $\|\gamma'_{x,v}(s)\|^2 = \|v\|^2=1$ and $\|\gamma_{x,v}(s)\|^2 \ge I$, this simplifies to
  \[
  \left\|\frac{D}{ds}Z(s)\right\|^2 \le \left\|\frac{D}{ds}J(s)\right\|^2 + \frac{C^2}{I^2} \|J(s)\|^2.
  \]
  
  Fix $(x_0,v_0)\in\mathcal{S}$. Since $I(x,v)$ is a continuous function on $T^1K$ and $I(x_0,v_0)=0$,
  we can choose an open neighbourhood $U$ of $(x_0,v_0)$ such that for all $(x,v)\in U\cap \mathcal{T}$, the function $I$ is sufficiently small to satisfy
  \[
  \frac{C^2}{I^2} \ge 1.
  \]
  Under this condition, we have
 $$
    \left\|\frac{D}{ds}Z(s)\right\|^2 
    \le 
    \frac{C^2}{I^2} \left( \left\|\frac{D}{ds}J(s)\right\|^2 + \|J(s)\|^2 \right) 
    = \left( \frac{C}{I} \right)^2 \|Z(s)\|^2.
 $$
  Taking the square root and combining with (\ref{eq:est3}) gives the inequality
  \begin{equation}\label{eq:est5}
    \left| \frac{d}{ds} \|Z(s)\| \right| 
    \le
     \frac{C}{I} \|Z(s)\|.
  \end{equation}
  From (\ref{eq:est5}) we have
  \begin{equation}\label{eq:bound}
    \|Z(s)\| \le \|Z(0)\| \exp\left( \frac{C}{I} |s| \right), \quad s\in \mathbb{R}.
  \end{equation}
  Indeed, (\ref{eq:est5}) gives
  \[
   - \frac{C}{I} \|Z(s)\|\le \frac{d}{ds} \|Z(s)\| \le \frac{C}{I} \|Z(s)\|\,.
  \]
  For $s>0$, 
  dividing both sides of $\frac{d}{ds} \|Z(s)\| \le \frac{C}{I} \|Z(s)\|$ by $\|Z(s)\|$ (since the Jacobi field is non-zero, $\|Z(s)\| \neq 0$) we obtain 
  \[
      \frac{d}{ds} \log \|Z(s)\| = \frac{1}{\|Z(s)\|} \frac{d}{ds} \|Z(s)\| \le \frac{C}{I}\,.
  \]
  Integrating both sides yields
  \[
      \log \|Z(s)\| - \log \|Z(0)\| = \int_0^s \frac{d}{d\tau} \log \|Z(\tau)\| \, d\tau \le \int_0^s \frac{C}{I} \, d\tau = \frac{C}{I} s\,.
  \]
  Thus we obtain 
  \begin{equation}\label{eq:z-exp}
    \|Z(s)\| \le \|Z(0)\| \exp\left( \frac{C}{I} s \right) = \|Z(0)\| \exp\left( \frac{C}{I} |s| \right).
  \end{equation}
  Similarly, for $s<0$, from (\ref{eq:est5}), we know that 
  $$
  \frac{d}{ds} \|Z(s)\| \ge -\frac{C}{I} \|Z(s)\|,
  $$ 
  which implies 
  $$
  \frac{d}{ds} \log \|Z(s)\| \ge -\frac{C}{I}.
  $$ 
  Integrating from $s$ to $0$ gives
  \[
      \log \|Z(0)\| - \log \|Z(s)\| = \int_s^0 \frac{d}{d\tau} \log \|Z(\tau)\| \, d\tau \ge \int_s^0 \left( -\frac{C}{I} \right) \, d\tau = -\frac{C}{I}(0-s) = \frac{C}{I} s\,.
  \]
  Hence we get
  \[
      \log \|Z(s)\| \le \log \|Z(0)\| - \frac{C}{I} s = \log \|Z(0)\| + \frac{C}{I} |s|\,,
  \]
  which leads to the same bound (\ref{eq:z-exp}) as in the case for $s>0$.

  Recall that $\|Z(0)\| = \sqrt{\|\xi\|^2 + \|\eta\|^2}$. Since
  \[
    \|J(s)\| \le \|Z(s)\| \quad \text{and} \quad \left\|\frac{D}{ds}J(s)\right\| \le \|Z(s)\|,
  \]
  it follows from (\ref{eq:bound}) that the estimates (\ref{eq:jacobi-bound-1}) and (\ref{eq:jacobi-bound-2}) hold.
\end{proof}

Let 
\begin{equation}\label{eq:tauplus}
  \mathcal{S}^+:=  \mathcal{S} \cap T^1K^+,
\qquad 
\mathcal{T}^+:=  \mathcal{T} \cap T^1K^+.
\end{equation}
To establish the $C^1$-smoothness of the map $\mathcal{I}$ on  $ T^1K^+$, we need to verify that the differential of $\mathcal{I}$ behaves continuously near the set $ \mathcal{S}^+$. 
Lemma~\ref{lem:diff-at-singular} shows that $\mathcal{I}$ is differentiable at any point  $(x_0,v_0) \in \mathcal{S}^+$ and that the differential  $d\mathcal{I}: T_{(x_0,v_0)}(T^1K)\to \mathbb{R}^{2N+2}$ is the zero map.
Lemma~\ref{lem:tangent-map-vanish} proves that for $(x,v)\in \mathcal{T}^+$ the differential $d\mathcal{I}: T_{(x,v)}(T^1K)\to \mathbb{R}^{2N+2}$  tends to the zero map as the point $(x,v)$ tends to $(x_0,v_0) \in \mathcal{S}^+$. 

\begin{lemma}\label{lem:diff-at-singular}
  For any point $(x_0, v_0) \in \mathcal{S}^+$, the map $\mathcal{I}$ is differentiable at $(x_0, v_0)$, and its differential is the zero map:
  \[
  d\mathcal{I} = 0.
  \]
\end{lemma}

\begin{proof}
  Let $(x_0, v_0) \in \mathcal{S}^+$. 
  By definition $\mathcal{I}(x_0, v_0) = 0$. 
  Let us consider an arbitrary smooth curve $c(\varepsilon)=(x(\varepsilon),v(\varepsilon)): (-\epsilon_0, \epsilon_0) \to T^1K$ with $c(0) = (x_0, v_0)$ and $c'(0) = (\xi, \eta) \in T_{(x_0,v_0)}(T^1K)$. To prove that $d\mathcal{I}=0$ at $(x_0,v_0)=c(0)$, 
  it suffices to show that
  $$
  \lim_{\varepsilon \to 0} \frac{\| \mathcal{I}(c(\varepsilon)) - \mathcal{I}(c(0)) \|}{|\varepsilon|} = 0.
$$
  i.e.,
$$
    \lim_{\varepsilon \to 0} \frac{\| \mathcal{I}(c(\varepsilon)) \|}{|\varepsilon|} = 0.
$$
  By (\ref{eq:I-integrals}) and (\ref{eq:J-bounds}),
  we have 
  \[
    \| \mathcal{I}(c(\varepsilon)) \| \leq  e^{-\frac{1}{I^2(c(\varepsilon))}} 
    \left(\|x(\varepsilon)\|^2 + \|v(\varepsilon)\|^2 \right)^{\frac{1}{2}} 
    =
    e^{-\frac{1}{I^2(c(\varepsilon))}} \|c(\varepsilon)\|.
    \]
  Since the curve $c(\varepsilon)=(x(\varepsilon),v(\varepsilon)), \varepsilon\in (-\epsilon, \epsilon)$ is bounded, there exists a constant $M > 0$ such that $\|c(\varepsilon)\| \leq M$. Hence 
  \begin{equation}\label{eq:est-M}
  \| \mathcal{I}(c(\varepsilon)) \| \leq M  e^{-\frac{1}{I^2(c(\varepsilon))}}.
\end{equation}
  Since the function $I: T^1K \to \mathbb{R}$ is smooth (as shown in \eqref{eq:explicit-i}), there exists a constant $L > 0$ such that
  \[
  |I(c(\varepsilon))| = |I(c(\varepsilon)) - I(c(0))| \le L |\varepsilon|,
  \]
  for $\varepsilon$ sufficiently close to 0. 
  Hence 
  \begin{equation}\label{eq:est-L}
    \frac{1}{|\varepsilon|} \le \frac{L}{  |I(c(\varepsilon))| }.
  \end{equation}
  Combining (\ref{eq:est-M}) and (\ref{eq:est-L}) we have
  \begin{equation}\label{eq:limit}
    0 \le \frac{\| \mathcal{I}(c(\varepsilon)) \|}{|\varepsilon|} \le M \frac{e^{-\frac{1}{I^2(c(\varepsilon))}}}{|\varepsilon|} \le ML \frac{e^{-\frac{1}{I^2(c(\varepsilon))}}}{|I(c(\varepsilon))|}.
  \end{equation}
  Since  $|I(c(\varepsilon))|\to |I(c(0))|=0$ as $\varepsilon \to 0$, and  $ \frac{1}{y} e^{-1/y^2}\to 0$ as  $y \to 0$, we know 
  \begin{equation}\label{eq:ics}
    \frac{e^{-\frac{1}{I^2(c(\varepsilon))}}}{|I(c(\varepsilon))|} \to 0 
    \quad 
    \text{~as~} \varepsilon\to0.
  \end{equation}
  Thus, by (\ref{eq:limit}) and (\ref{eq:ics}) we obtain 
  $$
  \frac{\| \mathcal{I}(c(\varepsilon)) \|}{|\varepsilon|}  \to 0 
  \quad 
  \text{~as~} \varepsilon\to0,
  $$
 proving that $\mathcal{I}$ is differentiable at $(x_0, v_0)$ with vanishing derivative.

  Lemma~\ref{lem:diff-at-singular} is proved.
\end{proof}

\begin{lemma}\label{lem:tangent-map-vanish}
  Let $(x_0, v_0) \in \mathcal{S}^+$ and $(x,v)\in \mathcal{T}^+$. Then the differential $d\mathcal{I}: T_{(x,v)}(T^1K^+) \to \mathbb{R}^{2N+2}$ tends to the zero map as $(x,v)$ tends to $(x_0, v_0)$. 
\end{lemma}

To prove the Lemma we estimate the norm of the differential $d \mathcal{I}$ to obtain that the norm 
$\|d \mathcal{I}\|$ can be controlled by a function of $I$, 
which tends to $0$ as $I$ tends to $0$ (see (\ref{eq:bound-est1})).

\begin{proof}
  Recall (see, e.g., \cite{Pat}, Section 1.3) that the tangent space $T_{(x,v)}(TK^+)$ at a point $(x,v) \in TK^+$ admits a canonical isomorphism 
\begin{equation}\label{eq:decomposition}
  T_{(x,v)}(TK^+) \cong T_xK^+ \oplus T_xK^+.
\end{equation}
This isomorphism is defined as follows.
Let $\mathcal{Z} \in T_{(x,v)}(TK^+)$ denote a tangent vector. Let $(x(\varepsilon),v(\varepsilon))\subset TK^+$ be a smooth curve adapted to $\mathcal{Z}$, meaning that it satisfies:
$$
(x(0), v(0)) = (x,v) \quad \text{and} \quad \frac{d}{d\varepsilon}\Big|_{\varepsilon=0} (x(\varepsilon), v(\varepsilon)) = \mathcal{Z}.
$$
 The isomorphism (\ref{eq:decomposition}) maps $\mathcal{Z}$ to the pair $(\xi, \eta)$ given by:
    \begin{equation}\label{eq:decomposition1}
      \mathcal{Z} \mapsto (\xi , \eta) := \left( \frac{d}{d\varepsilon}\Big|_{\varepsilon=0} x(\varepsilon), \  \frac{D}{d\varepsilon}\Big|_{\varepsilon=0} v(\varepsilon) \right).
    \end{equation}
    Here $\frac{D}{d\varepsilon}$ denotes the covariant derivative along the curve $x(\varepsilon)\subset K^+$.
    The \textit{Sasaki metric} on $TK^+$ is defined to be the Riemannian metric
    that makes the decomposition~(\ref{eq:decomposition}) orthogonal:
    \[
      \| \mathcal{Z} \|_{\text{Sasaki}}^2 := \| \xi \|^2 + \| \eta \|^2.
    \]

    Under the canonical isomorphism (\ref{eq:decomposition1}), the vector $\mathcal Z$ is tangent to the unit tangent bundle $T^1K^+$ at $(x,v)$
    if and only if (see, e.g., \cite{Pat}, Section 1.3)
    \begin{equation}\label{eq:decomposition1-t1}
      \langle \eta, v \rangle = 0.
    \end{equation}
    Indeed,  the unit tangent bundle $T^1K^+$ is the level set $f^{-1}(1)$ of the smooth function $f(x,v)= \langle v,v\rangle$ defined on $TK^+$, and $1$ is a regular value of $f$. The tangent vector $\mathcal{Z}$ at $(x,v)\in T^1K^+$ is tangent to $T^1K^+$ if and only if 
    $(df)_{(x,v)}(\mathcal{Z})=0$. That is
$$
    \frac{d}{d\varepsilon} \langle v(\varepsilon), v(\varepsilon) \rangle \Big|_{\varepsilon=0}  = 2 \left\langle \frac{D}{d\varepsilon}v(\varepsilon), v(\varepsilon) \right\rangle\Big|_{\varepsilon=0}  = 2 \langle \eta, v \rangle = 0.
$$

    We define the operator norm $\| d\mathcal{I} \|$ with respect to this Sasaki metric restricted to the tangent space of the unit bundle:
    \begin{equation}\label{eq:def-norm-dI}
      \| d \mathcal{I} \|:=
      \sup_{\substack{\mathcal{Z} \in T_{(x,v)}(T^1K^+), \\ 
      \|\mathcal{Z} \|_{\text{Sasaki}}=1}}
      {\| d \mathcal{I}(\mathcal{Z} ) \|_{\mathbb{R}^{2N+2}}}=
      \sup_{\substack{(\xi, \eta) \in T_xK^+ \oplus T_xK^+, \\ \langle \eta, v \rangle = 0, \\ \| \xi \|^2 + \| \eta \|^2=1}} {\| d \mathcal{I}(\xi, \eta) \|_{\mathbb{R}^{2N+2}}}.
    \end{equation}
 
The computation of $d\mathcal{I}$ consists of the following steps.

\textbf{Step 1: Differentiation of the function $s_0(x,v)$.}

By Lemma \ref{lem:two-types} and the conditions
$$
\langle \gamma_{x,v}(s_0), \gamma'_{x,v}(s_0) \rangle = 0, 
\quad
\langle \gamma_{x,v}(s_0), \gamma_{x,v}(s_0) \rangle = I,
$$ 
\begin{equation}\label{eq:initial}
  \gamma_{x,v}(0)=x,\quad\, \gamma'_{x,v}(0)=v, 
\end{equation}
we know that the geodesic $\gamma_{x,v}(s)$ is of the following form
\begin{equation} \label{eq:geo_form}
    \gamma_{x,v}(s) = q(s) \sqrt{(s-s_0)^2 + I},
\end{equation}
where $q(s) \in \Sigma$.
Using (\ref{eq:initial}) and (\ref{eq:geo_form}) we have
$$
    x = q(0) \sqrt{s_0^2 + I},
$$
$$
    v = q(0) \frac{- s_0}{\sqrt{s_0^2 + I}} + q'(0) \sqrt{ s_0^2 + I}.
$$
Recalling that $q(s) \in \Sigma$, we have $\langle q(0), q(0) \rangle = 1$ and $\langle q(0), q'(0) \rangle = 0$. 
Therefore, 
\begin{equation}\label{eq:xv}
    \langle x, v \rangle = \left\langle q(0)\sqrt{s_0^2 + I}, \, q(0) \frac{-s_0}{\sqrt{ s_0^2 + I}} \right\rangle = -s_0.
\end{equation}
Here the inner product is taken in the ambient Euclidean space $\mathbb{R}^{N+1}$.
We compute the derivative
$$
ds_0(\xi, \eta)=  \frac{d}{d\varepsilon}\Big|_{\varepsilon=0} s_0(x(\varepsilon), v(\varepsilon))
= - \frac{d}{d\varepsilon}\Big|_{\varepsilon=0} \langle x(\varepsilon), v(\varepsilon) \rangle 
= - \left( \left\langle \frac{d x}{d\varepsilon}, v \right\rangle + \left\langle x, \frac{d v}{d\varepsilon} \right\rangle \right)\Big|_{\varepsilon=0}
$$
\begin{equation}\label{eq:ds0-1}
 = - \langle \xi, v \rangle - \langle x, \eta \rangle - \langle x, \text{II}(\xi, v) \rangle.
  \end{equation}
Here we use the formula: 
$$
\frac{d v}{d\varepsilon}\Big|_{\varepsilon=0} 
= \frac{D v}{d\varepsilon}\Big|_{\varepsilon=0} + \text{II}(\xi, v) = \eta + \text{II}(\xi, v).
$$
Since $x$ is orthogonal to $\text{II}(\xi, v)\in N_xK^+$, (\ref{eq:ds0-1}) simplifies to 
\begin{equation}\label{eq:ds0}
  ds_0(\xi, \eta) = - \langle \xi, v \rangle - \langle x, \eta \rangle.
\end{equation}

By \eqref{eq:ds0} we have
\[
|ds_0(\xi,\eta)|
\le \|v\|\,\|\xi\| + \|x\|\,\|\eta\|.
\]
Fix \((x_0,v_0)\in \mathcal{S}^+\).
Then there exist an open neighborhood \(U_1\) of \((x_0,v_0)\) in $T^1K^+$ and a constant \(C_1>0\) such that
\[
\|v\| + \|x\| \le C_1
\quad \text{for all } (x,v)\in U_1.
\]
Consequently,
\begin{equation}\label{eq:bound-ds0}
|ds_0(\xi,\eta)|
\le C_1\bigl(\|\xi\| + \|\eta\|\bigr) 
\quad \text{for all } (x,v)\in U_1.
\end{equation}

\textbf{Step 2: Differentiation of the map $(x,v)\mapsto\bigl( \gamma_{x,v}(s_0(x,v)), \gamma_{x,v}'(s_0(x,v)) \bigr)$.}

    We define the Jacobi field $J(s)$ along the geodesic $\gamma_{x,v}(s)$ by the variation of position:
    \begin{equation}\label{eq:def-var-j}
    J(s) := \frac{\partial}{\partial \varepsilon}\Big|_{\varepsilon=0} 
    \gamma_{x(\varepsilon), v(\varepsilon)}(s).
  \end{equation}
Then $J(s)$ satisfies (\ref{eq:j-equ}) with initial conditions (see  \cite{Pat}, Section 1.5):
    \begin{equation}\label{eq:jacobi-xi-eta}
      J(0) = \frac{d}{d\varepsilon}\Big|_{\varepsilon=0} x(\varepsilon) = \xi, \qquad 
      \frac{D}{ds}J(0) = \frac{D}{d\varepsilon}\Big|_{\varepsilon=0} v(\varepsilon) = \eta.
    \end{equation}

    Following (\ref{eq:xv}), let $s_0(\varepsilon):=- \langle x(\varepsilon),v(\varepsilon)\rangle$ and  
    $s_0 = s_0(0) = -\langle x,v\rangle$.
    We now compute the derivative of the map 
    $$
    \varepsilon \mapsto 
    \bigl( 
      \gamma_{x(\varepsilon), v(\varepsilon)}(s_0(\varepsilon))
      , \;
      \gamma_{x(\varepsilon), v(\varepsilon)}'(s_0(\varepsilon))
      \bigr)
    $$
     at $\varepsilon=0$ in the ambient space $\mathbb{R}^{2N+2}$.

For $\varepsilon \mapsto \gamma_{x(\varepsilon), v(\varepsilon)}(s_0(\varepsilon))$, 
using  (\ref{eq:ds0}) and (\ref{eq:def-var-j}),
we get
        \begin{equation}\label{eq:diff-pos}
        \begin{aligned}
          \frac{d}{d\varepsilon}\Big|_{\varepsilon=0} 
          \gamma_{x(\varepsilon), v(\varepsilon)}(s_0(\varepsilon))
          &= 
          \frac{d s_0(\varepsilon)}{d\varepsilon}\Big|_{\varepsilon=0} 
        \cdot 
          \frac{\partial  \gamma_{x(\varepsilon), v(\varepsilon)}(s)}{\partial s}\Big|_{\varepsilon=0, s=s_0} 
          + 
          \frac{\partial  \gamma_{x(\varepsilon), v(\varepsilon)}(s_0)}{\partial \varepsilon}\Big|_{\varepsilon=0} \\
          &= 
          ds_0(\xi, \eta)\, \gamma'_{x,v}(s_0) + J(s_0).
        \end{aligned}
        \end{equation}  
        For 
        $\varepsilon \mapsto  
        \gamma_{x(\varepsilon), v(\varepsilon)}'(s_0(\varepsilon))
        $, 
        using (\ref{eq:def-var-j}) and $\frac{D}{ds}\gamma'_{x,v}(s) = 0$,
        we obtain
        \begin{equation}\label{eq:diff-ves}
          \begin{aligned}
        \frac{d}{d\varepsilon}\Big|_{\varepsilon=0} 
        \gamma_{x(\varepsilon), v(\varepsilon)}'(s_0(\varepsilon))
        =& \frac{d s_0(\varepsilon)}{d\varepsilon}\Big|_{\varepsilon=0} 
        \cdot 
        \frac{\partial  \gamma_{x(\varepsilon), v(\varepsilon)}'(s)}{\partial s}\Big|_{\varepsilon=0, s=s_0}  
        + 
        \frac{\partial  \gamma_{x(\varepsilon), v(\varepsilon)}'(s_0)}{\partial \varepsilon}\Big|_{\varepsilon=0} \\
        =& ds_0(\xi, \eta) \left( \frac{d}{ds}\Big|_{s=s_0} \gamma'_{x,v}(s)  \right)
        + \left( \frac{\partial}{\partial\varepsilon}\Big|_{\varepsilon=0}
        \frac{\partial}{\partial s}\Big|_{s=s_0}  
        \gamma_{x(\varepsilon), v(\varepsilon)}(s)
        \right)\\
        =&  ds_0(\xi, \eta) \left( 
          \frac{D}{ds}\Big|_{s=s_0} \gamma'_{x,v}(s)+ 
          \text{II}( \gamma'_{x,v}(s_0),  \gamma'_{x,v}(s_0))  \right) + 
          \frac{d}{d s}\Big|_{s=s_0}  J(s)  \\
        =&  ds_0(\xi, \eta) \,
          \text{II}(\gamma_{x,v}'(s_0), \gamma_{x,v}'(s_0))+ 
        \left(    \frac{D}{ds}J(s_0) + \text{II}(J(s_0), \gamma_{x,v}'(s_0))\right) .
      \end{aligned}
    \end{equation}

\textbf{Step 3: Differentiation of the function $I(x,v)$.}
Recall that $I(x,v) = \langle x,x \rangle - \langle x,v \rangle^2$, and $s_0= - \langle x, v \rangle$
(see (\ref{eq:xv})).
Using (\ref{eq:ds0}), we compute the derivative
$$
dI(\xi, \eta) = \frac{d}{d\varepsilon}\big|_{\varepsilon=0} I(x(\varepsilon), v(\varepsilon))
= \frac{d}{d\varepsilon}\Big|_{\varepsilon=0} \langle x(\varepsilon), x(\varepsilon) \rangle - \frac{d}{d\varepsilon}\Big|_{\varepsilon=0} \langle x(\varepsilon), v(\varepsilon) \rangle^2
$$
$$
= 2 \left\langle x, \xi  \right\rangle 
+2 \langle x, v \rangle \frac{d}{d\varepsilon}\Big|_{\varepsilon=0} \left( -\langle x(\varepsilon), v(\varepsilon) \rangle \right)
= 2 \langle x, \xi \rangle + 2 \langle x, v \rangle  ds_0(\xi, \eta).
$$
Using (\ref{eq:bound-ds0}), for all $(x,v)\in U_1$ we have 
$$
\begin{aligned}
  |dI(\xi, \eta) |
  & \leq 2\|x\|\|\xi\| + 2\|x\|\|v\|C_1 (\|\xi\| + \|\eta\|) \\
  & \leq 2\|x\|\left(\|\xi\| +\|\eta\|\right) +  2\|x\|\|v\|C_1 \left(\|\xi\| + \|\eta\|\right)\\
  & \leq \left( 2\|x\| +  2\|x\|\|v\|C_1\right) \left(\|\xi\| + \|\eta\|\right).
\end{aligned}
$$
Let $C_2>0$ be an upper bound of $ 2\|x\|+2\|x\|\|v\|C_1 $ on $U_1$. Then 
\begin{equation}\label{eq:bound-dI}
  |dI(\xi, \eta)|  \le C_2 (\|\xi\| + \|\eta\|).
\end{equation}

\textbf{Step 4: The computation of $d\mathcal{I}$ and estimates of $\|d\mathcal{I}\|$.}

We now combine the results from the previous steps to compute  $d\mathcal{I}(\xi, \eta)$.
By (\ref{eq:I-integrals}), we have 
\[
d \mathcal{I}(\xi, \eta) 
=  \frac{d}{d\varepsilon}\Big|_{\varepsilon=0} \mathcal{I}(x(\varepsilon), v(\varepsilon)) 
= \frac{d}{d\varepsilon}\Big|_{\varepsilon=0} e^{-\frac{1}{I^2(x(\varepsilon),v(\varepsilon))}}
\bigl( \gamma_{x(\varepsilon),v(\varepsilon)}
(s_0(\varepsilon)), \gamma'_{x(\varepsilon), v(\varepsilon)}(s_0(\varepsilon)) \bigr).
\]
Therefore, using (\ref{eq:diff-pos}) and (\ref{eq:diff-ves}), we have 
\begin{equation}\label{eq:total-dI}
\begin{aligned}
&d\mathcal{I}(\xi, \eta) = \frac{2 \, dI(\xi, \eta)}{I^3} e^{-\frac{1}{I^2}} \left( \gamma_{x,v}(s_0), \gamma_{x,v}'(s_0)\right) 
+ e^{-\frac{1}{I^2}}\\
& \qquad  \left(
  ds_0(\xi, \eta)\, \gamma'_{x,v}(s_0) + J(s_0), 
  \quad 
  ds_0(\xi, \eta) \left( 
    \text{II}( \gamma_{x,v}'(s_0),  \gamma_{x,v}'(s_0))\right) + 
    \frac{D}{ds}J(s_0)  + 
    \text{II}(J(s_0),  \gamma_{x,v}'(s_0))
  \right).
\end{aligned}
\end{equation}
Using  (\ref{eq:bound-ds0}), (\ref{eq:bound-dI}), 
$\|\gamma_{x,v}(s_0)\| =\sqrt{I}$, 
$\|\gamma_{x,v}'(s_0)\| = 1$,
and part 2) of Lemma~\ref{lem:cone-curvature} (substituting $t= \|\gamma_{x,v}(s_0)\|$), we have
\[
\begin{aligned}
\|d\mathcal{I}(\xi, \eta)\| 
&\le \frac{2 |dI(\xi, \eta)|}{I^3} e^{-\frac{1}{I^2}} 
\left(
  \|\gamma_{x,v}(s_0)\| +  \|\gamma_{x,v}'(s_0)\| \right) 
  + e^{-\frac{1}{I^2}} 
\left( |ds_0(\xi, \eta)|\|\gamma_{x,v}'(s_0)\| 
  + \|J(s_0)\| \right) \\
&\quad + e^{-\frac{1}{I^2}}  
\left(
  |ds_0(\xi, \eta)| \,
\|\text{II}( \gamma_{x,v}'(s_0),  \gamma_{x,v}'(s_0))\|
  + \|\frac{D}{ds}J(s_0)\| 
  + 
 \| \text{II}(J(s_0),  \gamma_{x,v}'(s_0))\|
   \right) \\
&\le \frac{2 |dI(\xi, \eta)|}{I^3} e^{-\frac{1}{I^2}} 
\left(
  \|\gamma_{x,v}(s_0)\| +  \|\gamma_{x,v}'(s_0)\| \right) 
  + e^{-\frac{1}{I^2}} 
\left( |ds_0(\xi, \eta)|\|\gamma_{x,v}'(s_0)\| 
  + \|J(s_0)\| \right) \\
&\quad + e^{-\frac{1}{I^2}}  
\left(
  |ds_0(\xi, \eta)| \frac{C\|\gamma_{x,v}'(s_0)\|^2}{\|\gamma_{x,v}(s_0)\|}  
  + \|\frac{D}{ds}J(s_0)\| 
  + \frac{C\|J(s_0)\|\|\gamma_{x,v}'(s_0)\|}{\|\gamma_{x,v}(s_0)\|}  \right) \\
&\le \frac{2 C_2(\|\xi\|+\|\eta\|)}{I^3} e^{-\frac{1}{I^2}} 
(\sqrt{I} + 1) + e^{-\frac{1}{I^2}}  C_1(\|\xi\|+\|\eta\|) 
+ e^{-\frac{1}{I^2}}  \|J(s_0)\|
\\
&\quad + e^{-\frac{1}{I^2}} 
\left( C_1(\|\xi\|+\|\eta\|)\frac{C}{\sqrt{I}}
+ \|\frac{D}{ds}J(s_0)\|
+ \frac{C}{\sqrt{I}} \|J(s_0)\| \right)
\\
&\le (\|\xi\|+\|\eta\|) e^{-\frac{1}{I^2}} 
\left(\frac{2 C_2}{I^3} 
(\sqrt{I}+1) +  
 C_1(1 + \frac{C}{\sqrt{I}}) \right)
\\
&\quad + e^{-\frac{1}{I^2}} 
(1+\frac{C}{\sqrt{I}}) \left( \|J(s_0)\| + \|\frac{D}{ds}J(s_0)\| \right).
\end{aligned}
\]
By Lemma \ref{lem:cone-jacobi}, there exists an open neighborhood $U_2\subset U_1$ of $(x_0, v_0)$ in $T^1K$ such that for $(x, v) \in U_2 \cap \mathcal{T}^+$,
\[
\|J(s_0)\| + \|\frac{D}{ds}J(s_0)\| \le 2 \sqrt{\|\xi\|^2 + \|\eta\|^2} \exp\left( \frac{C |s_0|}{I} \right).
\]
Let $C_4>0$ be an upper bound of $C|s_0|$ on $U_2$. We have
$$
  \|d\mathcal{I}(\xi, \eta)\| \le 
  \left(\|\xi\|+\|\eta\|\right)e^{-\frac{1}{I^2}}
  \left(
\frac{2 C_2}{I^3} (\sqrt{I}+1)
+ C_1(1 + \frac{C}{\sqrt{I}}) \right) 
+ 2\sqrt{\|\xi\|^2 + \|\eta\|^2} (1 + \frac{C}{\sqrt{I}})e^{-\frac{1}{I^2} + \frac{C_4}{I} }.
$$
To estimate $\|d \mathcal{I}\|$ (see (\ref{eq:def-norm-dI})), we 
take the supremum of $ \|d\mathcal{I}(\xi, \eta)\|$ over all $(\xi, \eta)\in T_xK^+ \oplus T_xK^+$ satisfying the condition $\|\xi\|^2 + \|\eta\|^2 = 1$. Noting that $\|\xi\| + \|\eta\| \le \sqrt{2}$, we obtain
\begin{equation}\label{eq:bound-est1}
\| d\mathcal{I} \| 
\le 
\sqrt{2}  e^{-\frac{1}{I^2}}
\left(
  \frac{2C_2}{I^3} (\sqrt{I}+1)
  + C_1(1 + \frac{C}{\sqrt{I}})
  \right)
  +  
  2(1 + \frac{C}{\sqrt{I}})
  e^{-\frac{1}{I^2} + \frac{C_4}{I}}
.
\end{equation}
As $(x, v) \to (x_0, v_0) \in \mathcal{S}^+$, we have $0 < I(x,v) \to 0$. The limit of the RHS of (\ref{eq:bound-est1}) is $0$ as $I\to 0$.
Consequently, $\|d\mathcal{I}\| \to 0$ as $(x, v) \to (x_0, v_0)$, which concludes the proof of Lemma \ref{lem:tangent-map-vanish}.

\end{proof}

Combining Lemmas~\ref{lem:diff-at-singular} and~\ref{lem:tangent-map-vanish}, we conclude that
\(\mathcal{I}\) is of class \(C^{1}\) on \(\mathcal{S}^+\).
Moreover, \(\mathcal{I}\) is \(C^{1}\)-smooth on \(\mathcal{T}^+\), since it is the product of the smooth function \(e^{-1/I^{2}}\) and the \(C^{1}\) map \(\mathcal{J}\) on \(\mathcal{T}^+\) (see Lemma~\ref{lem:smooth-integrals}).
Hence $\mathcal{I}$ is $C^1$-smooth on $T^1K^+$.
This completes the proof of part~2) of Lemma~\ref{lem:continuous-integrals}, and hence establishes Theorem~\ref{lem:integral}.

\section{Liouville--Arnold integrability}
In this section we will prove Theorem~\ref{thm:liouville-integrability}.
Let \(z=(z^1,\dots,z^n)\) be local coordinates on \(\Sigma\), and let
\[
\psi(z)= (\psi^1(z),\dots, \psi^{N+1}(z)) \in \Sigma \subset \mathbb{S}^{N} \subset \mathbb{R}^{N+1}
\]
be a parametrization of \(\Sigma\).
Let
\[
\Phi(t,z)= t\psi(z) \in K^+ \subset \mathbb{R}^{N+1},
\qquad t>0,
\]
be a parametrization of \(K^+\).
The metric on $K^+$ is given by
\[
dt^2 + t^2\sum_{i,j=1}^n g_{ij}(z)\, dz^i dz^j .
\]
Let \((y^0, y)\), with \(y=(y^1,\dots,y^n)\), denote the coordinates of a tangent
vector with respect to the basis
\(\{\partial_t, \partial_{z^1},\dots, \partial_{z^n}\}\),
and let \((p_0, p)\), with \(p=(p_1,\dots,p_n)\), denote the coordinates of a
cotangent vector with respect to the dual basis
\(\{dt, dz^1,\dots, dz^n\}\).

The Lagrangian of the geodesic flow on $TK^+$ is given by
\[
L(t, z; y^0, y) = \frac{1}{2}\left( (y^0)^{2} + t^2 \sum_{i,j=1}^n g_{ij}(z)\, y^i y^j \right).
\]
The \textit{Legendre transform} $\mathcal{L}$ associated with \(L\) is a map from
$TK^+$ to $T^*K^+$ that maps $T_{(t,z)}K^+$ to $T_{(t,z)}^*K^+$ via
\begin{equation}\label{eq:Legendre}
  p_0 = \frac{\partial L}{\partial y^0} = y^0,
  \qquad
  p_i = \frac{\partial L}{\partial y^i}
  = t^2 \sum_{j=1}^n g_{ij}(z)\, y^j .
\end{equation}
The Hamiltonian of the geodesic flow on \(T^*K^+\) is then given by
\begin{equation}\label{eq:Hamiltonian}
  H(t,z; p_0,p)
:= L\circ \mathcal{L}^{-1}
= \frac12 p_0^2 + \frac{1}{2t^2} \sum_{i,j=1}^n g^{ij}(z)\, p_i p_j,
\end{equation}
where $(g^{ij})$ denotes the inverse matrix of $(g_{ij})$.
Since the metric $g_{ij}$ is positive definite,
the Legendre transform $\mathcal{L}$ is a diffeomorphism from $TK^+$ to $T^*K^+$.
Let
\begin{equation}\label{eq:hat-tauplus}
  \widehat{\mathcal T}^{+}
  := \{(t,z; y^0, y)\in TK^+ \mid y\neq 0\}.
\end{equation}
Then $\widehat{\mathcal T}^{+} \subset TK^+$ is the union of all non-radial geodesic trajectories in $TK^+$.
One can check that
\[
\mathcal{M} = \mathcal{L}(\widehat{\mathcal T}^{+}) \subset T^*K^+.
\]

Recall that in Section~3 we define the first integrals
(see \eqref{eq:many-integrals1}):
\[
J_k \colon \mathcal T \to \mathbb R,
\qquad k = 1, \dots, N+1.
\]
Geometrically, for each $(x,v)\in \mathcal T$ with $\|v\|=1$, the geodesic
with initial data $(x,v)$ is tangent to the sphere $\mathbb S(\sqrt I)\subset\mathbb R^{N+1}$
at a unique point, and $J_k(x,v)$ is defined as the $k$-th coordinate of this point in $\mathbb R^{N+1}$.
This geometric construction extends without modification to arbitrary
$(x,v)\in \widehat{\mathcal T}^+$.
Indeed, for any nonzero initial velocity $v$, the geodesic with initial data $(x,v)$
is tangent to the sphere $\mathbb{S}(\sqrt I)$ at a unique point as well.
Since the geodesics with initial data $(x,v)$ and $(x,v/\|v\|)$ coincide as
unparametrized curves, their tangency points with $\mathbb{S}(\sqrt I)$ are the same.
We therefore define the extended first integrals
\[
\tilde J_k \colon \widehat{\mathcal T}^+ \to \mathbb R
\]
by the relation
\[
\tilde J_k(x,v) := J_k\!\left(x,\frac{v}{\|v\|}\right),
\qquad k = 1, \dots, N+1.
\]
In local coordinates $(t,z; y^0, y)$ on $TK^+$, this definition reads
\[
\tilde{J}_k(t,z; y^0, y)
:= J_k\!\left(
t,z;
\frac{y^0}{(2L)^{1/2}},
\frac{y}{(2L)^{1/2}}
\right),
\qquad k = 1, \dots, N+1.
\]

We then define
\[
\tilde F_k := \frac{\tilde J_k}{\sqrt I}
\colon \widehat{\mathcal T}^+ \to \mathbb R,
\qquad k = 1, \dots, N+1.
\]
Since both $\tilde J_k$ and $I$ are first integrals of the geodesic flow on
$\widehat{\mathcal T}^+$, it follows that the functions $\tilde F_k$ are also
first integrals.

\begin{remark}\label{rem:homogeneous}
  The first integrals $\tilde F_k$ are homogeneous of degree zero in $v$, that is,
  \[
  \tilde F_k(x,\lambda v)=\tilde F_k(x,v),
  \qquad \lambda>0.
  \]
  Indeed, the function $I(x,v)$ satisfies
  $
  I(x,\lambda v)=I(x,v),
  $
  and, by construction, the extended integrals $\tilde J_k$ satisfy the same
  property,
  $
  \tilde J_k(x,\lambda v)=\tilde J_k(x,v).
  $
  \end{remark}

Using the Legendre transform~(\ref{eq:Legendre}), we construct the functions
\[
F_k := \tilde{F}_k\circ \mathcal{L}^{-1} : \mathcal{M} \to \mathbb{R},
\qquad
I^* := I\circ \mathcal{L}^{-1} : \mathcal{M} \to \mathbb{R}.
\]
Then $F_k$ and $I^*$ are first integrals of the geodesic flow restricted to
$\mathcal{M}$.

Let us find $I^*$ in local coordinates.
In coordinates $(t,z; y^0, y)$, the geodesic $\gamma(s)$ and its tangent vector
$\gamma'(s)$ are represented as
\[
\begin{aligned}
  \gamma(s) &= t\, \psi(z), \\
  \gamma'(s)
  &= \frac{dt}{ds}\, \psi(z)
  + \sum_{i=1}^n t \frac{dz^i}{ds}\,
    \frac{\partial \psi(z)}{\partial z^i} = y^0 \psi(z)
  + \sum_{i=1}^n y^i \left(t\frac{\partial \psi(z)}{\partial z^i}\right).
\end{aligned}
\]
Thus by~(\ref{eq:distance-integral}) the first integral $I$ has the form
\[
\begin{aligned}
  I
  &= \|\gamma(s)\|^2
  - \frac{\langle \gamma(s), \gamma'(s)\rangle^2}
         {\|\gamma'(s)\|^2} \\
  &= t^2 \|\psi(z)\|
  - \frac{\left\langle
      t\psi(z),
      y^0 \psi(z)
      + t \sum_{i=1}^n
        \frac{\partial \psi(z)}{\partial z^i} y^i
    \right\rangle^2}
    {(y^0)^2 + t^2 \sum_{i,j=1}^n g_{ij}(z) y^i y^j} \\
  &= t^2
  - \frac{(t y^0)^2}
    {(y^0)^2 + t^2 \sum_{i,j=1}^n g_{ij}(z) y^i y^j}.
\end{aligned}
\]
Here we use $\|\psi(z)\|=1$,
$\left\langle \psi(z),
\frac{\partial \psi(z)}{\partial z^i} \right\rangle=0$,
and
\[
\left\langle
\frac{\partial \psi(z)}{\partial z^i},
\frac{\partial \psi(z)}{\partial z^j}
\right\rangle = g_{ij}.
\]
Via the Legendre transform~(\ref{eq:Legendre}) we obtain
\begin{equation}\label{eq:I-formula}
  I^* = I\circ \mathcal{L}^{-1}
  = t^2 - \frac{t^2 p_0^2}{2H}.
\end{equation}
One can easily check that $\{I^*,H\}=0$, where for differentiable
functions
$G_1,G_2$ on $T^*K^+$
\[
\{G_1,G_2\}
 =
 \frac{\partial G_2}{\partial t}\frac{\partial G_1}{\partial p_0}
 -
 \frac{\partial G_1}{\partial t}\frac{\partial G_2}{\partial p_0}
 +
 \sum_{i=1}^n\left(
 \frac{\partial G_2}{\partial z^i}\frac{\partial G_1}{\partial p_i}
 -
 \frac{\partial G_1}{\partial z^i}\frac{\partial G_2}{\partial p_i}
 \right).
\]

 We have the following.
\begin{lemma}\label{lem:poisson-brackets}
  The functions $F_k$ are in involution, i.e., $\{F_i, F_j\} = 0$ for all $1 \le i, j \le N+1$.
\end{lemma}

\begin{proof}
  The idea of the proof is the following. 
  The manifold $\mathcal{M}$ is covered by the trajectories that correspond to non-radial trajectories in $TK^+$.
  Since $F_i, i= 1, \dots, N+1$ are first integrals, 
  $$
F_{ij}:= \{F_i, F_j\},
\qquad  i, j =1,\dots, N+1,
  $$
  are also first integrals of the geodesic flow and hence is constant along the trajectories. 
  In $TK^+$, 
each non-radial trajectory has a special point, where the trajectory is tangent to the sphere $\mathbb{S}^N(\sqrt{I})$.
On the dual manifold $\mathcal{M}\subset T^*K^+$ every trajectory has also unique special point $A\in \mathcal{M}$, characterized by $p_0=0$, and this point corresponds to the touching point of the non-radial trajectory in $TK^+$ of $\mathbb{S}^N(\sqrt{I})$. To check that 
$F_{ij}=0$ we will make the calculations in a neighborhood of the point $A\in \mathcal{M}$.

Let us find the form of $F_k$ in a neighborhood of $A\in \mathcal{M}$.
For this we consider the geodesic $\gamma_{x,v}(s)$ with initial data $A=(x,v)$, where
$\langle x, v \rangle =0$.
In local coordinates the condition $\langle x, v \rangle =0$ is $y_0=0$, and we write 
\[
x = t\,\psi(z), \qquad 
v = \sum_{i=1}^n y^i \left(t \frac{\partial \psi}{\partial z^i}\right).
\]
The point
\[
\bigl(\tilde J_1(t,z;0,y), \dots, \tilde J_{N+1}(t,z;0,y)\bigr)
\]
is the unique point of tangency of the geodesic $\gamma_{x,v}(s)$ with the sphere $\mathbb S^N(\sqrt I)$.
Since $\gamma_{x,v}(s)$ is tangent to $\mathbb S^N(\sqrt I)$
precisely at $x = t\psi(z)$, it follows that
$$
  \tilde J_i \big|_{y^0 = 0} = t\,\psi^i(z),
  \qquad i = 1, \dots, N+1.
$$
Hence, via the Legendre transform~\eqref{eq:Legendre}, we obtain
\begin{equation}\label{eq:rj-cot}
  \left(\tilde{J}_i \circ \mathcal{L}^{-1}\right)\big|_{p_0 = 0}
  = t\,\psi^i(z).
\end{equation}
By~\eqref{eq:I-formula} and~\eqref{eq:rj-cot}, it follows that
\[
F_i\big|_{p_0 = 0}
= \frac{\left(\tilde{J}_i \circ \mathcal{L}^{-1}\right)\big|_{p_0 = 0}}
       {\sqrt{I^*}\big|_{p_0 = 0}}
= \psi^i(z),
\qquad
i = 1, \dots, N+1.
\]
In particular, $F_i\big|_{p_0 = 0}$ is independent of both $t$ and $p$.
Hence
$$
\frac{\partial F_i}{\partial t} \Big|_{p_0=0}
=0, 
\qquad
\frac{\partial F_i}{\partial p_k} \Big|_{p_0=0}
=
0, \; 
k=1,\dots,n.
$$
Therefore 
$$
    F_{ij}(A)=\{F_i, F_j\}|_{p_0=0} = 0.
$$
Since $F_k$ are first integrals, their Poisson brackets $\{F_i, F_j
\}$ are constant along geodesic trajectories.
Therefore, we obtain
$$
    \{F_i, F_j\} =  0.
$$
Lemma \ref{lem:poisson-brackets} is proved.
\end{proof}

By Lemma~\ref{lem:poisson-brackets}, we obtain $N+1$ first integrals in involution.
For Liouville--Arnold integrability, we require $n+1$ first integrals, including the Hamiltonian $H$, that are in involution and functionally independent almost everywhere.
To construct such  integrals,
in Lemma~\ref{lem:rank-F} we prove that the rank of the map $(F_1,\dots,F_{N+1})$ is $n$, and in Lemma~\ref{lem:generic-G} we construct the required first integrals.

\begin{lemma}\label{lem:rank-F}
  The rank of the map $F = (F_1, \dots, F_{N+1}): \mathcal{M} \to \Sigma \subset \mathbb{R}^{N+1}$ is $n=\dim \Sigma$ at every point.
  \end{lemma}
  
\begin{proof}
  Recall that $\widehat{\mathcal T}^+$ denotes the open subset of $TK^+$
  corresponding to non-radial geodesics (see~\eqref{eq:hat-tauplus}),
  and $\mathcal T^+$ denotes the open subset of $T^1K^+$
  corresponding to unit-speed (i.e., $\|v\|=1$) non-radial geodesics (see~\eqref{eq:tauplus}).

    We begin by decomposing the map $F$ as a composition of smooth maps:
$$
      F = \mu \circ \Psi \circ \tau \circ \mathcal{L}^{-1} \colon {\mathcal{M}} \to \Sigma \subset \mathbb{R}^{N+1},
$$
    where
    \[
    \mathcal{L}^{-1} \colon \mathcal{M} \to \widehat{\mathcal T}^+ \subset 
    TK^+
    \]
    is the inverse Legendre transform restricted to $\mathcal{M}$, and
    \[
    \tau \colon\widehat{\mathcal T}^+ \to \mathcal{T}^+ \subset T^1K^+,
    \qquad
    \tau(x,v) = \bigl(x, \tfrac{v}{\|v\|}\bigr),
    \]
   $$
      \Psi \colon \mathcal{T}^+ \to K^+,
      \qquad
      \Psi(x,v) = \gamma_{x,v}\bigl(s_0(x,v)\bigr) 
      \in 
      \mathbb{S}^N(\sqrt{I})\cap K^+,
$$
    $$
    \mu: K^{+} \to \Sigma \subset \mathbb{R}^{N+1},
    \qquad \mu(x) = \frac{x}{\|x\|}.
    $$
    Since the image of $F$ is contained in
    $\Sigma \subset \mathbb{R}^{N+1}$ and $\dim \Sigma = n$, the rank of $F$ is at most $n$.
    To show that $\operatorname{rank} F = n$, it suffices to prove that
    \begin{itemize}
      \item[(i)] $\mathcal{L}^{-1} \colon \mathcal{M} \to \widehat{\mathcal T}^+$ is a submersion (i.e., $d(\mathcal{L}^{-1})$ is surjective at every point of $\mathcal{M}$);
      \item[(ii)] $\tau \colon \widehat{\mathcal T}^+ \to \mathcal{T}^+ $ is a submersion;
      \item[(iii)] $\Psi \colon \mathcal{T}^+ \to K^+$ is a submersion;
      \item[(iv)] $\mu: K^{+} \to \Sigma \subset \mathbb{R}^{N+1}$ is a submersion to its image $\Sigma$.
    \end{itemize}
  
    The statements (i), (ii), (iv) follow directly from the definitions of the maps. Let us prove (iii).

    Let $\mathcal{Z} \in T_{(x,v)}\mathcal{T}^+$ be a tangent vector at $(x,v)$ with $\|v\|=1$, represented via the canonical isomorphism by
    $$
    (\xi, \eta)\in T_xK^+\oplus T_xK^+,
    $$
with
    \begin{equation}\label{eq:domain-constraint}
      \langle v, \eta \rangle=0,
    \end{equation}
    (see \eqref{eq:decomposition1} and \eqref{eq:decomposition1-t1}). 
    Using \eqref{eq:diff-pos} and (\ref{eq:ds0}), the tangent map $d\Psi: T_{(x,v)}(\mathcal{T}^+)\to T_{\Psi(x,v)}K^+$ is given by:
    \begin{equation}\label{eq:dPsi_vector}
        d\Psi(\xi, \eta) 
        =
        \frac{d}{d\varepsilon}\Big|_{\varepsilon=0} 
        \gamma_{x(\varepsilon), v(\varepsilon)}(s_0(\varepsilon))
        = - \left( \langle \xi, v \rangle +\langle x, \eta \rangle \right) \gamma'_{x,v}(s_0) + J(s_0) 
        \quad
        \in T_{\Psi(x,v)}K^+,
    \end{equation}
    where $J(s)$ is the Jacobi field along $\gamma_{x,v}(s)$ satisfying $J(0)=\xi$ and $\frac{D}{ds}J(0)=\eta$ (see (\ref{eq:jacobi-xi-eta})).
  
    \medskip
    Before proving the  surjectivity of $d\Psi$, we establish some properties of Jacobi fields on $K^+$ that will be used later. Let $J(s)$ be any Jacobi field along $\gamma_{x,v}(s)$. Consider the functions:
    $$
      h(s):= \langle \frac{D}{ds}J(s), \gamma'_{x,v}(s)\rangle, \quad
      l(s):= \langle \frac{D}{ds}J(s), \gamma_{x,v}(s)\rangle, \quad
      g(s):= \langle J(s), \gamma'_{x,v}(s)\rangle.
    $$
    
    First, for $h(s)$, using $\frac{D}{ds}\gamma'_{x,v}(s)=0$ and (\ref{eq:j-equ}) we have 
    \begin{equation}\label{eq:hs-const}
      \frac{d}{ds}h(s) = 
      \langle \frac{D^2}{ds^2}J(s), \gamma'_{x,v}(s)\rangle + 
      \langle\frac{D}{ds} J(s), \frac{D}{ds}\gamma'_{x,v}(s)\rangle  =  \langle -  R\big(J(s),\gamma'_{x,v}(s)\big)\gamma'_{x,v}(s), \gamma'_{x,v}(s)\rangle= 0.
    \end{equation}
    Therefore, $h(s)$ is constant along $\gamma_{x,v}(s)$.
  
    Second, for $l(s)$, using $\frac{D}{ds}\gamma'_{x,v}(s)=0$ and (\ref{eq:j-equ}) we have 
    $$
    \frac{d}{ds}l(s) = 
    \langle \frac{D^2}{ds^2}J(s), \gamma_{x,v}(s)\rangle + 
    \langle \frac{D}{ds} J(s), \gamma'_{x,v}(s)\rangle 
    =\langle -  R\big(J(s),\gamma'_{x,v}(s)\big)\gamma'_{x,v}(s), \gamma_{x,v}(s)\rangle +  h(s).
    $$
    By the interchange symmetry of Riemann curvature tensor and part~1) of Lemma~\ref{lem:cone-curvature-formula}, we have 
    $$
    \langle -  R\big(J(s),\gamma'_{x,v}(s)\big)\gamma'_{x,v}(s), \gamma_{x,v}(s)\rangle = \langle -  R\big(\gamma'_{x,v}(s), \gamma_{x,v}(s)\big)  J(s),\gamma'_{x,v}(s)\rangle =0.
    $$
    Hence 
    \begin{equation}\label{eq:ls-deriv}
      \frac{d}{ds}l(s) = h(s).
    \end{equation}
  
    Third, for $g(s)$, using $\frac{D}{ds}\gamma'_{x,v}(s)=0$ and (\ref{eq:j-equ}) we have 
    \begin{equation}\label{eq:gs-deriv}
      \frac{d}{ds}g(s) =  \langle \frac{D}{ds}J(s), \gamma'_{x,v}(s)\rangle + 
      \langle J(s), \frac{D}{ds}\gamma'_{x,v}(s)\rangle  =  \langle \frac{D}{ds}J(s), \gamma'_{x,v}(s)\rangle= h(s).
    \end{equation}
    In particular, if $h(s) \equiv 0$, then $l(s)$ and $g(s)$ are constant.
  
    \medskip
    Now let us prove that $d\Psi: T_{(x,v)}\mathcal{T}^+ \to T_{\Psi(x,v)}K^+$ is surjective.
       The tangent space $T_{\Psi(x,v)}K^+$ admits the orthogonal decomposition
       \[
       T_{\Psi(x,v)}K^+
       = \operatorname{span}\{\gamma_{x,v}(s_0)\}
       \oplus \operatorname{span}\{\gamma_{x,v}'(s_0)\}
       \oplus W,
       \]
       where $W$ denotes the orthogonal complement of
       $
       \operatorname{span}\{\gamma_{x,v}(s_0)\}
       \oplus \operatorname{span}\{\gamma_{x,v}'(s_0)\}
       $
       in $T_{\Psi(x,v)}K^+$. We prove the surjectivity in two steps.
   
    \textit{Step 1: Prove $\operatorname{span}\{\gamma_{x,v}'(s_0)\}
    \oplus W \subset \operatorname{Im} d \Psi$.}

    Let $Y_1\in W, \alpha_1 \in \mathbb{R}$.
    Let $J(s)$ be the Jacobi field 
    along $\gamma_{x,v}(s)$ satisfying
    \begin{equation}\label{eq:jacobi-1}
      J(s_0)=Y_1, 
    \qquad 
    \frac{D}{ds}J(s_0)= \alpha_1 \gamma_{x,v}(s_0). 
    \end{equation}
    Let us define $(\xi_1,\eta_1)\in T_xK^+ \oplus  T_xK^+$ as 
    $$
    (\xi_1,\eta_1):= \left(J(0), \frac{D}{ds}J(0)\right).
    $$
    We will verify $
    (\xi_1,\eta_1)\in T_{(x,v)} \mathcal{T}^+$ by checking $\langle \eta_1, v \rangle=0$, and will show that 
    \begin{equation}\label{eq:image-1}
    d\Psi(\xi_1, \eta_1) = -I \alpha_1 \gamma_{x,v}'(s_0) + Y_1.
  \end{equation}

    Using the properties established in (\ref{eq:hs-const}), (\ref{eq:ls-deriv}) and (\ref{eq:gs-deriv}):
    \begin{itemize}
      \item Since $h(s)$ is constant, using $h(0)=h(s_0)$ we have 
      $$
      \langle \eta_1,v \rangle = \langle \frac{D}{ds}J(0), \gamma'_{x,v}(0)\rangle=\langle \frac{D}{ds}J(s_0), \gamma'_{x,v}(s_0)\rangle=
      \langle \alpha_1 \gamma_{x,v}(s_0), \gamma'_{x,v}(s_0) \rangle=0.
      $$ This verifies the condition (\ref{eq:domain-constraint}), hence verifies that $(\xi_1,\eta_1)\in T_{(x,v)} \mathcal{T}^+$.
      \item Since $h(s_0)=0$, we have $h(s)\equiv 0$. Thus $l(s)$ is constant, which gives
      \begin{equation}\label{eq:ls01}
        \langle \eta_1,  x  \rangle = l(0)=l(s_0)= 
        \langle \alpha_1 \gamma_{x,v}(s),\gamma_{x,v}(s) \rangle = \alpha_1 I.
      \end{equation}
      \item Since $h(s)\equiv 0$, $g(s)$ is constant, which gives
      \begin{equation}\label{eq:gs01}
        \langle \xi_1, v\rangle= g(0)= g(s_0)= \langle Y_1, \gamma'_{x,v}(s_0)\rangle=0 .
      \end{equation}
    \end{itemize}
    Using (\ref{eq:gs01}) and (\ref{eq:ls01}), we have 
    \begin{equation}\label{eq:ds0-g1}
      \langle \xi_1, v \rangle + \langle x, \eta_1 \rangle  = \alpha_1 I.
    \end{equation}
    Therefore, by (\ref{eq:dPsi_vector}), (\ref{eq:jacobi-1}) and (\ref{eq:ds0-g1}) we obtain (\ref{eq:image-1}):
    $$
    d\Psi(\xi_1, \eta_1) = - \alpha_1 I\gamma_{x,v}'(s_0) + Y_1.
    $$
    Hence $\operatorname{span}\{\gamma_{x,v}'(s_0)\}\oplus W
    \subset \operatorname{Im} d\Psi$.

    \textit{Step 2: Prove $\operatorname{span}\{\gamma_{x,v}(s_0)\} \subset \operatorname{Im} d \Psi$.}
    
    Consider the vector field
    \begin{equation}\label{eq:jacobi-2}
      X(s) := \gamma_{x,v}(s) - s\,\gamma'_{x,v}(s)
    \end{equation}
    along the geodesic $\gamma_{x,v}(s)$.
    Then $X(s)$ is a Jacobi field. Indeed, a direct computation shows that
    \[
    \frac{D}{ds} X(s)
    = \gamma'_{x,v}(s) - \gamma'_{x,v}(s)
    - s \frac{D}{ds} \gamma'_{x,v}(s)
    = 0,
    \]
    and, using part~1) of Lemma~\ref{lem:cone-curvature-formula},
    \[
    R\big(X(s),\gamma'_{x,v}(s)\big)\gamma'_{x,v}(s)
    =
    R\big(\gamma_{x,v}(s),\gamma'_{x,v}(s)\big)\gamma'_{x,v}(s)
    -
    R\big(s \gamma'_{x,v}(s),\gamma'_{x,v}(s)\big)\gamma'_{x,v}(s)
    = 0.
    \]
    Hence,
    \[
    \frac{D^2}{ds^2} X(s)
    +
    R\big(X(s),\gamma'_{x,v}(s)\big)\gamma'_{x,v}(s)
    = 0.
    \]
    
    Let $(\xi_2,\eta_2) := (X(0), \tfrac{D}{ds}X(0))$. Then
    \[
    \xi_2 = \gamma_{x,v}(0)=x,
    \quad
   \eta_2 =  0.
    \]
    In particular, we have $\langle v,\eta_2 \rangle = \langle v,0 \rangle = 0$,
    which shows that $(\xi_2,\eta_2) \in T_{(x,v)}\mathcal{T}^+$.

    Using (\ref{eq:dPsi_vector}), (\ref{eq:jacobi-2}) and $s_0=- \langle x, v \rangle$ we obtain 
    $$
    \begin{aligned}
      d\Psi(\xi_2,\eta_2)& =  - \left( \langle \xi_2, v \rangle +\langle x, \eta_2 \rangle \right) \gamma'_{x,v}(s_0) + X(s_0)  \\
    & = - \left( \langle x, v \rangle +\langle x, 0\rangle \right) \gamma'_{x,v}(s_0) + \gamma_{x,v}(s_0)-s_0 \gamma_{x,v}'(s_0)\\
    &= - \langle x, v \rangle  \gamma'_{x,v}(s_0)+ \gamma_{x,v}(s_0) + \langle x, v \rangle  \gamma'_{x,v}(s_0) \\
    & = \gamma_{x,v}(s_0).
    \end{aligned}
    $$
    Hence $\operatorname{span}\{\gamma_{x,v}(s_0)\}\in \operatorname{Im} d\Psi$.
  
    Therefore, we have proved the tangent map $d\Psi$ is surjective, and hence $\Psi$ is a submersion.

    \medskip

    With (iii) established, and using that (i), (ii), and (iv) are submersions, we conclude that the composition 
$$
F = \mu \circ \Psi \circ \tau \circ \mathcal{L}^{-1}: \mathcal{M} \to \Sigma
$$
is a submersion, as the composition of submersions is a submersion. Since $\dim \Sigma = n$, we have 
$$
\operatorname{rank} F = n.
$$
  \end{proof}
 
  In the following lemma, we aim to construct a map $\mathcal{F}=(\mathcal{F}_1,\dots,\mathcal{F}_n): \mathcal{M} \to \mathbb{R}^n$ consisting of $n$ functions (first integrals) that are functionally independent almost everywhere and in involution. To establish the existence of such a map, we will employ the Thom Transversality Theorem (see, e.g, \cite{GG}).

\begin{lemma}\label{lem:generic-G}
  There exists a $C^1$ map $G \colon \Sigma \to \mathbb{R}^n$ such that the composition
  \[
  \mathcal{F} := G \circ F \colon \mathcal{M} \to \mathbb{R}^n
  \]
  has rank $n$ almost everywhere on $\mathcal{M}$.
  \end{lemma}
  \begin{proof}
    Since every $C^r$, $1\le r < \infty$, manifold is $C^r$-diffeomorphic
    to a $C^\infty$ manifold (see, e.g., \cite[Chapter~2, Theorem~2.10]{H}),
    we fix a $C^3$-diffeomorphism
    \[
    \Theta \colon \Sigma \longrightarrow \tilde{\Sigma}
    \]
    onto a $C^\infty$ manifold $\tilde{\Sigma}$.
    We will apply Thom transversality theorem to obtain 
a $C^{\infty}$ map
\[
\tilde{G} \colon \tilde{\Sigma} \to \mathbb{R}^n
\]
whose differential has rank $n$ almost everywhere.
Then the composition
\[
G = \tilde{G} \circ \Theta \colon \Sigma \to \mathbb{R}^n
\]
is $C^3$ smooth,
and in particular $C^1$ smooth,
which is the desired map.

Consider the 1-jet bundle $J^1(\tilde{\Sigma}, \mathbb{R}^n)$. Let us briefly recall the definition (see, e.g., 
\cite[Chapter~II, Definition~2.1]{GG}). 
Two smooth mappings $f, g: \tilde{\Sigma} \to \mathbb{R}^n$ are said be \emph{equivalent} at a point $p \in \tilde{\Sigma}$, denoted by $f \sim_1 g$, if
\[
    f(p) = g(p) \quad \text{and} \quad (df)_p = (dg)_p,
\]
where $(df)_p: T_p\tilde{\Sigma} \to T_{f(p)}\mathbb{R}^n$ is the differential at $p$. 
Let $[f]_p$ denote the equivalence class of $f$ under the equivalence relation $\sim_1$. The 1-jet bundle is defined as the union of such equivalence classes:
\[
    J^1(\tilde{\Sigma}, \mathbb{R}^n) = \bigcup_{(p,q) \in \tilde{\Sigma}\times \mathbb{R}^n} \left\{ [f]_p \mid f: \tilde{\Sigma} \to \mathbb{R}^n \text{ is smooth}, f(p)=q \right\}.
\]
$J^1(\tilde{\Sigma}, \mathbb{R}^n)$ forms a vector bundle over the product $\tilde{\Sigma} \times \mathbb{R}^n$. An element $\sigma = [f]_p \in J^1(\tilde{\Sigma}, \mathbb{R}^n)$ is uniquely characterized by the triple $(p, q, L)$, where $p \in \tilde{\Sigma}$, $q = f(p) \in \mathbb{R}^n$, and $L = (df)_p$.

Let $S \subset J^1(\tilde{\Sigma}, \mathbb{R}^n)$ be the \emph{singular locus}, defined as the set of 1-jets $\sigma = (p, q, L)$ where the linear map $L$ is singular (i.e., $\operatorname{rank} L < n$). 
For each $r \in \{1, \dots, n\}$, define
\[
    S_r = \{ \sigma = (p, q, L) \in S \mid \operatorname{corank}(L) = r \}.
\]
Following \cite[Chapter~II, Theorem~5.4]{GG}, each  $S_r$ is a submanifold of codimension $r^2$ in $J^1(\tilde{\Sigma}, \mathbb{R}^n)$. Thus, $S$ can be written as a union of submanifolds
\[
    S = \bigcup_{r=1}^n S_r.
\]

For any smooth map $f: \tilde{\Sigma} \to \mathbb{R}^n$, we define its \emph{1-jet extension} $j^1f: \tilde{\Sigma} \to J^1(\tilde{\Sigma}, \mathbb{R}^n)$ by
\[
    j^1f(p) = \left( p, f(p), (df)_p \right).
\]
The map $j^1f$ is said to be \emph{transverse} to the submanifold $S_r$ if, for every point $p$ such that the jet $\sigma = j^1f(p)$ lies in $S_r$, the tangent space $T_{\sigma}S_r$ and the tangent space to the image $T_{\sigma}(j^1f(\tilde{\Sigma}))$ span the entire tangent space of the jet bundle at $\sigma$.

Consider the space $C^\infty(\tilde{\Sigma}, \mathbb{R}^n)$ of all smooth mappings equipped with the $C^\infty$-topology.
The Thom Transversality Theorem (see \cite[Chapter~II, Theorem~4.9]{GG}) implies that for each $r$, the set of maps $f: \tilde{\Sigma} \to \mathbb{R}^n$ whose 1-jet extension $j^1f$ is transverse to $S_r$ is a   \emph{residual} subset (see \cite[Chapter~II, Definition~3.2]{GG}) of $C^\infty(\tilde{\Sigma}, \mathbb{R}^n)$; thus this set is dense in $C^\infty(\tilde{\Sigma}, \mathbb{R}^n)$  (see \cite[Chapter~II, Proposition~3.3]{GG}). 
 Since the intersection of finitely many dense subsets is dense, there exists a map $\tilde{G} \in C^\infty(\tilde{\Sigma}, \mathbb{R}^n)$ such that $j^1\tilde{G}$ is transverse to every $S_r$ simultaneously.

For such a generic map $\tilde{G}$, we define the singular locus of $\tilde{G}$ as the preimage of $S$ under $j^1\tilde{G}$:
\[
    \mathcal{K}_{\tilde{G}} := (j^1\tilde{G})^{-1}(S) = \bigcup_{r=1}^n (j^1\tilde{G})^{-1}(S_r).
\]
Following \cite[Chapter~II, Theorem~4.4]{GG}, each preimage $(j^1\tilde{G})^{-1}(S_r)$ is a submanifold of $\tilde{\Sigma}$ with codimension $r^2$.  
Thus, $\mathcal{K}_{\tilde{G}}$ is a finite union of submanifolds of positive codimension. Therefore, $\mathcal{K}_{\tilde{G}}$ has measure zero in $\tilde{\Sigma}$.

Finally, let $G = \tilde{G} \circ \Theta$. 
Since $\Theta: \Sigma \to \tilde{\Sigma}$ is a diffeomorphism, the singular locus of $G$, 
defined as
\[
    \mathcal{K}_G := \{ p \in \Sigma \mid \operatorname{rank}(dG)_p < n \}
\]
is precisely the preimage $\Theta^{-1}(\mathcal{K}_{\tilde{G}})$.
Since diffeomorphisms preserve sets of measure zero, $\mathcal{K}_G$ has measure zero in $\Sigma$.

Now, consider the composition $\mathcal{F} = G \circ F$. By the chain rule, for $a \in \mathcal{M}$,
\[
    (d\mathcal{F})_a = (dG)_{F(a)} \circ (dF)_a.
\]
Since $(dF)_a$ is surjective (Lemma~\ref{lem:rank-F}), we have
\[
    \operatorname{rank} (d\mathcal{F})_a = \operatorname{rank} (dG)_{F(a)}.
\]
Therefore, 
 the set 
 $$ \mathcal{K}_{\mathcal{F}}:=\{a \in \mathcal{M} \mid \operatorname{rank} (d\mathcal{F})_a < n\}
 $$ is exactly $F^{-1}(\mathcal{K}_G)$.
Since $F$ is a submersion and $\mathcal{K}_G$ has measure zero in $\Sigma$, it follows that $\mathcal{K}_{\mathcal{F}}$
has measure zero in  $\mathcal{M}$,
     which means $\mathcal{F}$ has rank $n$ almost everywhere.

    Lemma~{\ref{lem:generic-G}} is proved.
  \end{proof}

  With the map $\mathcal{F} = (\mathcal{F}_1, \dots, \mathcal{F}_n)$ constructed in Lemma~\ref{lem:generic-G}, we now show that $\{H, \mathcal{F}_1, \dots, \mathcal{F}_n\}$ constitutes the required set of $n+1$ functionally independent first integrals, where H is the Hamiltonian 
  (\ref{eq:Hamiltonian}).
  
  \begin{lemma}\label{lem:independence-with-H}
    The functions $(H, \mathcal{F}_1, \dots, \mathcal{F}_n)$ are functionally independent, i.e., 
    $\operatorname{rank}(H, \mathcal{F}_1, \dots, \mathcal{F}_n) = n+1$, 
    almost everywhere on $\mathcal{M}$. 
  \end{lemma}
  
  \begin{proof}
    By Lemma~\ref{lem:generic-G}, the Jacobian matrix of the map $\mathcal{F} = (\mathcal{F}_1, \dots, \mathcal{F}_n)$ has rank $n$ almost everywhere on $\mathcal{M}$. Therefore, to prove that the rank of the Jacobian matrix of the augmented map $(H, \mathcal{F}_1, \dots, \mathcal{F}_n)$ is $n+1$, it suffices to show that the $1$-form
    $dH$ does not lie in the span of $\{d\mathcal{F}_1, \dots, d\mathcal{F}_n\}$.
    
    To this end, let us consider the vector field on $T^*M$:
    $$
    Y := \sum_{j=0}^{n} p_j \frac{\partial}{\partial p_j}.
    $$   
    The Hamiltonian $H$ is homogeneous in $(p_0,p)$ of degree 2, implying
    \begin{equation}\label{eq:homogeneous-2}
      dH(Y) = 2H.
    \end{equation}
    In contrast,  since the map $F$ satisfies (see Remark~\ref{rem:homogeneous}) 
$$
        F(z,t; \lambda p_0, \lambda p ) = 
        F(z,t;  p_0,  p) \qquad 
        \forall \lambda > 0,
$$
    and the map $\mathcal{F}$ is defined by the composition $\mathcal{F} = G \circ F$, 
    we know
    $$
        \mathcal{F}_i(z,t; \lambda p_0, \lambda p) = G_i(F(z,t; \lambda p_0, \lambda p)) = G_i(F(z,t;  p_0,  p)) = \mathcal{F}_i(z,t;  p_0,  p).
    $$
    This implies that each $\mathcal{F}_i$ is homogeneous in $(p_0,p)$ of degree 0. By Euler's homogeneous function theorem, we have
    \begin{equation}\label{eq:homogeneous-0}
        d\mathcal{F}_i(Y) = 0, 
        \qquad  i = 1, \dots, n.
    \end{equation}
    Now, if 
    $dH \in \operatorname{span}\{d\mathcal{F}_1, \dots, d\mathcal{F}_n\}$, we can represent $dH$ as 
    \begin{equation}\label{eq:linear-independent}
      dH = \sum_{i=1}^{n} c_i\; d\mathcal{F}_i.
    \end{equation}
    Evaluating (\ref{eq:linear-independent}) on the  vector field $Y$ and using (\ref{eq:homogeneous-0}), we obtain:
$$
       dH(Y)=  \sum_{i=1}^{n} c_i d\mathcal{F}_i(Y) =0,
$$    
contradicting to (\ref{eq:homogeneous-2}).     
    Hence, $dH$ does not lie in the span of $\{d\mathcal{F}_1, \dots, d\mathcal{F}_n\}$, and hence 
    $$
    \operatorname{rank}(H, \mathcal{F}_1, \dots, \mathcal{F}_n)  = 
    \operatorname{rank}(\mathcal{F}_1, \dots, \mathcal{F}_n) +1 
    = n+1,
    $$
    almost everywhere on $\mathcal{M}$.

    Lemma~\ref{lem:independence-with-H} is proved.
  \end{proof}

  With Lemma~\ref{lem:poisson-brackets}-\ref{lem:independence-with-H} established, the proof of Theorem 2 is complete.

\vspace{1cm}

\noindent
{Andrey E. Mironov}\\
Sobolev Institute of Mathematics, Novosibirsk, Russia\\ 
Novosibirsk State University, Novosibirsk, Russia\\ 
Email: \texttt{mironov@math.nsc.ru}

\medskip
\noindent
{Siyao Yin}\\
Sobolev Institute of Mathematics, Novosibirsk, Russia\\
Email: \texttt{siyao.yin@math.nsc.ru}
\vspace{1cm}


\begin{thebibliography}{1}
	
	

\bibitem{BKF}
A. V. Bolsinov, V. V. Kozlov, A. T. Fomenko, The Maupertuis principle and geodesic flows on the sphere arising from integrable cases in the dynamics of a rigid body, Russian Math. Surveys, 50:3 (1995), 473--501.

\bibitem{M}
V. S. Matveev, Real Analyticity of 2-Dimensional Superintegrable Metrics and Solution of Two Bolsinov--Kozlov--Fomenko Conjectures, Regul. Chaotic Dyn., 30:4 (2025),  677--687

\bibitem{MS}
V. S. Matveev, V. V. Shevchishin, Differential invariants for cubic integrals of geodesic flows on surfaces, J. Geom. Phys., 60:6--8 (2010), 833--856.


\bibitem{V1}
G. Valent, On a class of integrable systems with a cubic first integral, Comm. Math. Phys., 299:3 (2010), 631--649.



\bibitem{VDS}
G. Valent, Ch. Duval, V. Shevchishin, Explicit metrics for a class of two-dimensional cubically superintegrable systems, J. Geom. Phys., 87 (2015), 461--481.


\bibitem{K1}
V. V. Kozlov, Integrability and non-integrability in Hamiltonian mechanics , Russian Math. Surveys, 38:1 (1983), 1--76

\bibitem{K}
K. Kiyohara, Two-dimensional geodesic flows having first integrals of higher degree, Math. Ann., 320:3 (2001), 487--505.

\bibitem{BF}
A. V. Bolsinov, A. T. Fomenko, Integrable geodesic flows on two-dimensional surfaces, Monogr. Contemp. Math., Consultants Bureau, New York (2000).

\bibitem{K}
B. Kruglikov, Invariant characterization of Liouville metrics and polynomial integrals, J. Geom. Phys., 58:8 (2008), 979--995.

\bibitem{Bel}
G. V. Belozerov, Geodesic flow on an intersection of several confocal quadrics in $\mathbb{R}^n$, Sbornik: Mathematics, 214:7 (2023), 897--918.
  
\bibitem{BF}
G. V. Belozerov, A. T. Fomenko, Generalized Jacobi--Chasles theorem in non-Euclidean spaces, Sbornik: Mathematics, 215:9 (2024), 1159--1181.

  
\bibitem{K}
V. V. Kozlov, Topological obstructions to the integrability of natural mechanical systems, Sov. Math. Dokl., 20 (1979), 1413--1415.

\bibitem{T}  
I. A. Taimanov, Topological obstructions to integrability of geodesic flows on non-simply-connected manifolds, Math. USSR-Izv., 30:2 (1988), 403--409.


\bibitem{But}  
  L. Butler, A new class of homogeneous manifolds with Liouville-integrable geodesic flows, C. R. Math. Rep. Acad. Sci. Canada, 21:43 (1999), 7509--7522.


  \bibitem{BT}  
  A. V. Bolsinov, I. A. Taimanov, Integrable geodesic flows with positive topological entropy, Invent. Math., 140 (2000), 639--650.

  \bibitem{BM}
  B. Bialy, A. E. Mironov,
Rich quasi-linear system for integrable geodesic flows on the 2-torus,
Discrete Contin. Dyn. Syst. A, 29 (2011), 81--90.

  \bibitem{MY1} 
  A. E. Mironov, S. Yin, Integrable Birkhoff billiards inside cones, arXiv:2501.12843 (2025).

  \bibitem{MY2} 
  A. E. Mironov, S. Yin, Billiard Trajectories inside Cones, Regul. Chaot. Dyn. 30 (2025), 688--710 .

  \bibitem{ON}
  B. O'Neill, Semi-Riemannian Geometry with Applications to Relativity. Academic Press. (1983) 

  \bibitem{Pat}
G. P. Paternain, Geodesic Flows. Birkh\"auser Boston. (1999)

\bibitem{GG}
M. Golubitsky, V. Guillemin, Stable Mappings and Their Singularities.
Springer-Verlag, New York. (1973)

\bibitem{H}
M. W. Hirsch, Differential Topology.
Springer-Verlag, New York. (1976)


\end{thebibliography}
\end{document}